\providecommand{\U}[1]{\protect\rule{.1in}{.1in}}
\numberwithin{equation}{section}
\newcommand{\calA}{\mathcal{A}}
\newcommand{\mC}{\mathbb{C}}
\newtheorem{theorem}{Theorem}[section]
\newtheorem{lemma}[theorem]{Lemma}
\newtheorem{corollary}[theorem]{Corollary}
\newtheorem{proposition}[theorem]{Proposition}
\theoremstyle{definition}
\theoremstyle{definition}
\newtheorem{definition}[theorem]{Definition}
\theoremstyle{definition}
\begin{document}
\title[Algebraic-analytic properties of $A_{0}+AP_{+}$]{A subalgebra of the Hardy algebra relevant in control theory and its
algebraic-analytic properties}
\author{Marie Frentz and Amol Sasane}
\address{Department of Mathematics, London School of Economics, Houghton Street, London
WC2A 2AE, United Kingdom}
\email{e.m.frentz@lse.ac.uk, sasane@lse.ac.uk}

\begin{abstract}
We denote by $A_{0}+AP_{+}$ the Banach algebra of all complex-valued functions
$f$ defined in the closed right halfplane, such that $f$ is the sum of a
holomorphic function vanishing at infinity and a ``causal'' almost periodic
function. We give a complete description of the maximum ideal space
${\mathfrak{M}}(A_{0}+AP_{+})$ of $A_{0}+AP_{+}$. Using this description, we
also establish the following results:

\begin{enumerate}
\item The corona theorem for $A_{0}+AP_{+}$.

\item ${\mathfrak{M}}(A_{0}+AP_{+})$ is contractible (which  implies that
$A_{0}+AP_{+}$ is a projective free ring).

\item $A_{0}+AP_{+}$ is not a GCD domain.

\item $A_{0}+AP_{+}$ is not a pre-Bezout domain.

\item $A_{0}+AP_{+}$ is not a coherent ring.

\end{enumerate}
The study of the above algebraic-anlaytic properties is motivated by
applications in the frequency domain approach to linear control theory, where they play an important role in the
stabilization problem.

\end{abstract}
\subjclass[2010]{Primary 30H80; Secondary 46J20, 93D15, 30H05.}
\keywords{maximal ideal space, corona theorem, contractability, GCD, pre-Bezout, coherence.}
\maketitle

\section{Introduction}

The aim in this paper is to study the algebraic-analytic properties of a new
Banach algebra which is relevant in control theory as a class of stable
transfer functions.

This Banach algebra, denoted by $A_{0}+AP_{+}$, which is defined below, is a
bigger Banach algebra than the well known algebra $\mathcal{A}_{+}$
 used widely in control theory since the 1970s (see \cite{CalDes}, \cite{CZ}),
for systems described by PDEs and delay-differential equations. Recall that
$\mathcal{A}_{+}$ consists of all Laplace transforms of complex Borel measures
$\mu$ on $\mathbb{R}$ with support contained in the half-line $[0,+\infty)$,
and such that $\mu$ does not have a singular non-atomic part. Let
$\mathbb{C}_{\scriptscriptstyle\geq0}:=\{s\in\mathbb{C}:\text{Re}(s)\geq0\}$.
Then by the Lebesgue decomposition, it follows that
\[
\mathcal{A}_{+}=\left\{  \widehat{\mu}:\mathbb{C}_{\scriptscriptstyle\geq
0}\rightarrow\mathbb{C}:%
\begin{array}
[c]{ll}%
\widehat{\mu}(s)=\widehat{f_{a}}(s)+\displaystyle\sum_{ k\geq
0}f_{k}e^{-st_{k}}\;(s\in\mathbb{C}_{\scriptscriptstyle\geq0}), & \\
f_{a}\in L^{1}[0,+\infty),\;(f_{k})_{k\geq0}\in\ell^{1}, & \\
t_{0}=0<t_{1},t_{2},t_{3},\cdots. &
\end{array}
\right\}  .
\]
$\mathcal{A}_{+}$ is a Banach algebra with pointwise operations, and the norm taken as the total variation of
the measure, namely
\[
\Vert\widehat{\mu}\Vert_{\mathcal{A}_{+}}:=|\mu|([0,+\infty))=\Vert f_{a}%
\Vert_{L^{1}[0,+\infty)}+\Vert(f_{k})_{k\geq0}\Vert_{\ell^{1}},\quad
\widehat{\mu}\in\mathcal{A}_{+}.
\]
The algebra $\calA_+$ is relevant in control theory as a
class of stable transfer functions because it maps $L^{p}$ inputs to $L^{p}$
outputs for all $1\leq p\leq+\infty$: indeed, if $\widehat{\mu}\in
\mathcal{A}_{+}$, $1\leq p\leq+\infty$, $u\in L^{p}[0,+\infty)$, then
$y:=\mu\ast u\in L^{p}[0,+\infty)$ and moreover,
\[
\sup_{0\neq u\in L^{p}}\frac{\Vert y\Vert_{p}}{\Vert u\Vert_{p}}\leq
\Vert\widehat{\mu}\Vert_{\mathcal{A}_{+}}.
\]
In fact, if $p=1$ or $p=+\infty$, then we have equality above.

On the other hand, another widely used Banach algebra in control theory
serving as a class of stable transfer functions is the Hardy algebra
$H^{\infty}$ of the half-plane, consisting of all bounded and holomorphic
functions $f$ defined in the open right half-plane
$$
\mathbb{C}_{\scriptscriptstyle>0}:=\{s\in\mathbb{C}:\text{Re}(s)>0\},
$$
again with
pointwise operations, but now with the supremum norm:
\[
\Vert f\Vert_{\infty}=\sup_{s\in\mathbb{C}_{\scriptscriptstyle>0}}|f(s)|,\quad
f\in H^{\infty}.
\]
It is well-known that if $f\in H^{\infty}$ and $u\in L^{2}[0,+\infty)$, then
$y$ defined by $\widehat{y}(s):=f(s)\widehat{u}(s)$ ($s\in\mathbb{C}%
_{\scriptscriptstyle>0}$) is such that $y\in L^{2}[0,+\infty)$. Moreover, we
have
\[
\sup_{0\neq u\in L^{2}}\frac{\Vert y\Vert_{2}}{\Vert u\Vert_{2}}= \Vert
f\Vert_{\infty}.
\]
Clearly $\mathcal{A}_{+}\subset H^{\infty}$, but from the point of view of
control theory, $H^{\infty}$ is an unnecessarily large object to serve as a
class of stable transfer functions, since it includes functions that can
hardly be considered to represent any physical system.

In this article we consider the closure of $\mathcal{A}_{+}$ in $H^{\infty}$
in the $\|\cdot\|_{\infty}$, and this will be our Banach algebra $A_{0}%
+AP_{+}$. We give the precise definition below.

\begin{definition}
\label{snow}Let
\[
A_{0}:=\left\{  f:\mathbb{C}_{\scriptscriptstyle\geq0}\rightarrow\mathbb{C}:\;%
\begin{array}
[c]{l}%
f\text{ is holomorphic in }\mathbb{C}_{\scriptscriptstyle>0}\\
f\text{ is continuous on }\mathbb{C}_{\scriptscriptstyle\geq0}\\
\displaystyle \lim_{s\rightarrow\infty}f(s)=0
\end{array}
\right\}  ,
\]
and let the set of causal almost periodic functions $AP_{+}$ be the closure of
$\textrm{span}\{e^{-\;\! \cdot\;\! t},t\geq0\}$ in the $L^{\infty}$-norm. In this paper we
consider the following class of functions
\[
A_{0}+AP_{+}=\{f:\mathbb{C}_{\scriptscriptstyle\geq0}\rightarrow\mathbb{C\ }:f=f_{\scriptscriptstyle A_{0}%
}+f_{\scriptscriptstyle AP_{+}},\ f_{\scriptscriptstyle A_0}\in A_{0},\ f_{\scriptscriptstyle AP_+}\in AP_{+}\},
\]
with pointwise operations and with the norm $\|f_{\scriptscriptstyle A_{0}}+f_{\scriptscriptstyle AP_{+}%
}\|_{\infty}.$
\end{definition}

More precisely, we consider classes of unstable control systems with transfer
functions belonging to the field of fractions $\mathbb{F}(A_{0}+AP_{+}%
)$. Our first main result is the following
corona theorem for $A_{0}+AP_{+}$:

\begin{theorem}
\label{coronathm}
Let $n,d\in A_{0}+AP_{+}$. The following are equivalent;
\begin{enumerate}
\item There exist $x,y\in A_{0}+AP_{+}$ such that $nx+dy=1$.

\item There exist $\delta>0$ such that $|n(s)|+|d(s)|\geq\delta>0$ for all
$s\in\mathbb{C}_{\scriptscriptstyle \geq0}$.
\end{enumerate}
\end{theorem}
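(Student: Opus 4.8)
The plan is to obtain the implication (1) $\Rightarrow$ (2) from a one-line boundedness estimate, and to deduce (2) $\Rightarrow$ (1) from the description of $\mathfrak{M}(A_{0}+AP_{+})$ obtained above together with elementary Gelfand theory. For (1) $\Rightarrow$ (2): if $nx+dy=1$ with $x,y\in A_{0}+AP_{+}$, then $x$ and $y$ are bounded on $\mathbb{C}_{\scriptscriptstyle\geq0}$ (as $A_{0}+AP_{+}\subset H^{\infty}$), so for every $s\in\mathbb{C}_{\scriptscriptstyle\geq0}$
\[
1=|n(s)x(s)+d(s)y(s)|\leq\max\{\|x\|_{\infty},\|y\|_{\infty}\}\bigl(|n(s)|+|d(s)|\bigr),
\]
and (2) holds with $\delta=1/\max\{\|x\|_{\infty},\|y\|_{\infty}\}$.

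For the substantial implication (2) $\Rightarrow$ (1) I would use that $A_{0}+AP_{+}$ is a commutative unital Banach algebra (the constant functions lie in $AP_{+}$). By the standard correspondence between maximal ideals and characters, the Bezout equation $nx+dy=1$ is solvable in $A_{0}+AP_{+}$ if and only if there is no $\varphi\in\mathfrak{M}(A_{0}+AP_{+})$ with $\varphi(n)=\varphi(d)=0$, equivalently, if and only if the continuous function $|\widehat{n}|+|\widehat{d}|$ is bounded below by a positive constant on the compact space $\mathfrak{M}(A_{0}+AP_{+})$. The point evaluations $\mathrm{ev}_{s}$, $s\in\mathbb{C}_{\scriptscriptstyle\geq0}$, form a subset of $\mathfrak{M}(A_{0}+AP_{+})$, and on this subset hypothesis~(2) says exactly that $|\widehat{n}|+|\widehat{d}|\geq\delta$. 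Since $\{\varphi:|\widehat{n}(\varphi)|+|\widehat{d}(\varphi)|\geq\delta\}$ is closed, it suffices to show that these point evaluations are dense in $\mathfrak{M}(A_{0}+AP_{+})$.

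Establishing this density is where the description of $\mathfrak{M}(A_{0}+AP_{+})$ is used, and it is the step I expect to be the main obstacle. From that description, $\mathfrak{M}(A_{0}+AP_{+})$ is the union of the finite point evaluations $\mathrm{ev}_{s}$ ($s\in\mathbb{C}_{\scriptscriptstyle\geq0}$) and a ``fibre over infinity'' consisting of the characters annihilating the closed ideal $A_{0}$; such a $\varphi$ depends only on the $AP_{+}$-component and, via $(A_{0}+AP_{+})/A_{0}\cong AP_{+}$, corresponds to a bounded semicharacter $\lambda\mapsto\varphi(e_{\lambda})$ of $[0,\infty)$, where $e_{\lambda}(s)=e^{-\lambda s}$. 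For each such $\varphi$ I would construct a net $(s_{\alpha})$ in $\mathbb{C}_{\scriptscriptstyle\geq0}$ with $|s_{\alpha}|\to\infty$ and $\mathrm{ev}_{s_{\alpha}}\to\varphi$ in the Gelfand topology. Writing $f=f_{\scriptscriptstyle A_{0}}+f_{\scriptscriptstyle AP_{+}}$, one has $f_{\scriptscriptstyle A_{0}}(s_{\alpha})\to0$ automatically, so it is enough to arrange $e_{\lambda}(s_{\alpha})\to\varphi(e_{\lambda})$ for every $\lambda\geq0$ and then to pass from trigonometric polynomials in the $e_{\lambda}$ to general elements of $AP_{+}$ by a routine $3\varepsilon$-argument using $\|\mathrm{ev}_{s}\|\leq1$. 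Producing such points $s_{\alpha}$ is precisely the density of $\mathbb{R}$ in the Bohr compactification, and the escape to infinity is possible because the set of common $\varepsilon$-almost periods of any finite family of frequencies is relatively dense in $\mathbb{R}$ (this is where recurrence of almost periodic functions is used). Granting the density, for $\varphi$ in the fibre over infinity we get $|\widehat{n}(\varphi)|+|\widehat{d}(\varphi)|=\lim_{\alpha}(|n(s_{\alpha})|+|d(s_{\alpha})|)\geq\delta$, and for $\varphi=\mathrm{ev}_{s}$ the bound is immediate from (2); hence $|\widehat{n}|+|\widehat{d}|\geq\delta$ throughout $\mathfrak{M}(A_{0}+AP_{+})$, which yields (1). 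Alternatively one could first factor out the ideal $A_{0}$, reducing to a corona statement for $AP_{+}$ alone and deriving its hypothesis on $\mathbb{C}_{\scriptscriptstyle\geq0}$ from (2) by the same almost-periodicity argument; but the route above is the more economical one once the description of $\mathfrak{M}(A_{0}+AP_{+})$ is available.
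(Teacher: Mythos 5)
Your proposal is correct and is essentially the paper's argument in contrapositive-free packaging: the paper assumes the Bezout equation is unsolvable, picks a character $\varphi$ of type (c) from Theorem~\ref{maxideal} vanishing on $n$ and $d$, and then uses Kronecker's theorem together with the relative density of $\varepsilon$-almost periods to produce points $\sigma+i\omega_{n}$ with $\omega_{n}\to+\infty$ at which $|n|+|d|<\delta$ (the $A_{0}$ parts dying at infinity), which is exactly your density-of-point-evaluations step. Since the key ingredients coincide (the description of $\mathfrak{M}(A_{0}+AP_{+})$, Kronecker approximation, almost-periodic recurrence, and $\lim_{s\to\infty}f_{\scriptscriptstyle A_{0}}(s)=0$), there is no substantive difference between the two proofs.
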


A similar theorem can also be shown for matricial data, but for the sake of simplicity,
we just prove the result for a pair of functions. To prove this theorem,
we characterize the maximal ideal space $ \mathfrak{M}(A_{0}+AP_{+})$ of the Banach algebra
$A_{0}+AP_{+}$; see Theorem~\ref{maxideal}. The
next main result is the following:

\begin{theorem}
\label{Contractible}$\mathfrak{M}(A_{0}+AP_{+})$ is contractible.
\end{theorem}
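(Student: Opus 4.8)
The plan is to exhibit a concrete contraction of $\mathfrak{M}(A_0+AP_+)$ onto a single point using the one–parameter semigroup of right translations. For $a\ge 0$ put $(R_af)(s):=f(s+a)$ for $f\in A_0+AP_+$ and $s\in\mathbb{C}_{\scriptscriptstyle\geq0}$. The first step is to verify that each $R_a$ is a well–defined unital algebra endomorphism of $A_0+AP_+$ which is contractive for $\|\cdot\|_\infty$. If $f=f_{\scriptscriptstyle A_0}+f_{\scriptscriptstyle AP_+}$, then $R_af_{\scriptscriptstyle A_0}\in A_0$, because translating to the right preserves holomorphy on $\mathbb{C}_{\scriptscriptstyle>0}$, continuity on $\mathbb{C}_{\scriptscriptstyle\geq0}$ and vanishing at infinity; and $R_a$ sends the function $s\mapsto e^{-st}$ to $e^{-at}$ times itself, so by contractivity and the definition of $AP_+$ one gets $R_af_{\scriptscriptstyle AP_+}\in AP_+$. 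Since $R_a(fg)=(R_af)(R_ag)$ and $R_a1=1$, dualising gives that $\varphi\mapsto\varphi\circ R_a$ maps $\mathfrak{M}(A_0+AP_+)$ into itself, continuously for the Gelfand topology, for each fixed $a$.

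The heart of the argument is two norm–limit statements. \textbf{(i)} $\|R_hf-f\|_\infty\to0$ as $h\to0^+$: for $f_{\scriptscriptstyle A_0}$ this is uniform continuity on $\mathbb{C}_{\scriptscriptstyle\geq0}$ of a continuous function vanishing at infinity, and for $f_{\scriptscriptstyle AP_+}$ one approximates in $\|\cdot\|_\infty$ by a causal exponential polynomial $p=\sum_kc_k e^{-(\cdot)\tau_k}$ (with $\tau_k\ge0$), uses $\|R_hp-p\|_\infty\le\sum_k|c_k|\,|e^{-h\tau_k}-1|\to0$, and invokes contractivity of $R_h$. \textbf{(ii)} The limit $\Phi_\infty(f):=\lim_{a\to+\infty}f(a)$ exists for every $f\in A_0+AP_+$, defines a character $\Phi_\infty\in\mathfrak{M}(A_0+AP_+)$, and $\|R_af-\Phi_\infty(f)\cdot1\|_\infty\to0$ as $a\to+\infty$. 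Indeed $R_af_{\scriptscriptstyle A_0}\to0$ uniformly since $f_{\scriptscriptstyle A_0}$ vanishes at infinity; and for a causal exponential polynomial $p$ approximating $f_{\scriptscriptstyle AP_+}$, writing $c:=\sum_{\tau_k=0}c_k$ one has $\|R_ap-c\cdot1\|_\infty\le\sum_{\tau_k>0}|c_k|e^{-a\tau_k}\to0$; together with contractivity this shows $R_af$ converges in $\|\cdot\|_\infty$ to a constant as $a\to+\infty$. Because that limit is attained pointwise at $s=a\to+\infty$, $\Phi_\infty$ is a pointwise limit of the evaluation characters $f\mapsto f(a)$, hence itself a nonzero character. (One also notes that the decomposition $f=f_{\scriptscriptstyle A_0}+f_{\scriptscriptstyle AP_+}$ is unique, since a nonzero element of $AP_+$ restricts on each vertical line to a nonzero almost periodic function, which cannot vanish at infinity; so $\Phi_\infty$ agrees with the Bohr mean of $f_{\scriptscriptstyle AP_+}$.)

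With (i) and (ii) in hand, I would set
\[
H:\ \mathfrak{M}(A_0+AP_+)\times[0,1]\ \longrightarrow\ \mathfrak{M}(A_0+AP_+),\qquad H(\varphi,t)=\begin{cases}\varphi\circ R_{(1-t)/t},&0<t\le1,\\ \Phi_\infty,&t=0.\end{cases}
\]
Then $H(\varphi,1)=\varphi$ (as $R_0=\mathrm{id}$) and $H(\cdot,0)\equiv\Phi_\infty$, so it remains only to check that $H$ is continuous, i.e.\ that $(\varphi,t)\mapsto H(\varphi,t)(f)$ is continuous for each $f\in A_0+AP_+$. On $\mathfrak{M}(A_0+AP_+)\times(0,1]$ this follows from (i) (which, with the contractivity and semigroup property of the $R_a$, gives norm–continuity of $t\mapsto R_{(1-t)/t}f$) together with the joint continuity of $(\varphi,g)\mapsto\varphi(g)$ on $\mathfrak{M}(A_0+AP_+)\times(A_0+AP_+)$. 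Continuity at a point $(\varphi_0,0)$ is where (ii) enters: $|H(\varphi,t)(f)-\Phi_\infty(f)|=\bigl|\varphi\bigl(R_{(1-t)/t}f-\Phi_\infty(f)\cdot1\bigr)\bigr|\le\|R_{(1-t)/t}f-\Phi_\infty(f)\cdot1\|_\infty\to0$ as $t\to0^+$, uniformly in $\varphi$. This gives the required contraction of $\mathfrak{M}(A_0+AP_+)$ to the point $\Phi_\infty$.

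The step I expect to require the most care is \textbf{(ii)}: extracting the ``evaluation at infinity'' character $\Phi_\infty$ and, above all, upgrading the pointwise convergence $R_af\to\Phi_\infty(f)\cdot1$ to a \emph{uniform} one on $\mathbb{C}_{\scriptscriptstyle\geq0}$ — this is exactly what is needed for $H$ to be continuous up to $t=0$, and it is here that the structure of $AP_+$ is used (density of causal exponential polynomials, which among other things makes the Bohr mean multiplicative on $AP_+$, a feature special to the one–sided setting). If one prefers, the same homotopy can be transported through the homeomorphism of Theorem~\ref{maxideal} and written out on the explicit model of $\mathfrak{M}(A_0+AP_+)$, but the semigroup description above keeps the continuity verification cleanest.
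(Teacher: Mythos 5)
Your proposal is correct, and the underlying homotopy is in fact the same deformation as the paper's: on the explicit model of Theorem~\ref{maxideal} one has $\underline{s}\circ R_a=\underline{s+a}$, $\varphi_{\sigma,\chi}\circ R_a=\varphi_{\sigma+a,\chi}$ and $\underline{+\infty}\circ R_a=\underline{+\infty}$, so up to the reparametrization $a=-\log(1-t)$ versus $a=(1-t)/t$ (and the reversed roles of $t=0$ and $t=1$, harmless in view of Definition~\ref{contract} after $t\mapsto 1-t$) your $H$ coincides with the paper's. What is genuinely different is how continuity is established. The paper works entirely on the Gelfand side: it uses the full description of $\mathfrak{M}(A_0+AP_+)$ from Theorem~\ref{maxideal} together with the topological Lemmas~\ref{close}--\ref{lims}, and verifies net convergence case by case on the three strata. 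You instead prove two norm estimates for the translation semigroup on the algebra itself, namely strong continuity $\|R_hf-f\|_\infty\to0$ as $h\to0^+$ and the uniform convergence $\|R_af-\Phi_\infty(f)\cdot 1\|_\infty\to0$ as $a\to+\infty$, and then deduce joint continuity of $(\varphi,t)\mapsto\varphi\bigl(R_{a(t)}f\bigr)$ from contractivity of characters, with the delicate endpoint handled \emph{uniformly} in $\varphi$. This buys independence from the characterization of the maximal ideal space (your argument shows more generally that $\mathfrak{M}(R)$ is contractible for any unital commutative Banach algebra carrying a strongly continuous contractive semigroup of unital endomorphisms converging in norm to a character times the identity), and it replaces the case analysis by two clean estimates; the small debts you rely on are all sound: elements of $A_0$ vanish at infinity uniformly on $\mathbb{C}_{\scriptscriptstyle\geq0}$ (hence are uniformly continuous there and satisfy $\|R_af_{\scriptscriptstyle A_0}\|_\infty\to0$), $R_a$ preserves $AP_+$ by contractivity and density of exponential polynomials, and $\Phi_\infty$, being a pointwise limit of the evaluations $f\mapsto f(a)$ with $\Phi_\infty(1)=1$, is indeed a character --- it is precisely the paper's $\underline{+\infty}$, $f\mapsto f_0$. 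The paper's route, on the other hand, reuses Theorem~\ref{maxideal}, which it needs anyway for the corona theorem, and makes explicit how each stratum of $\mathfrak{M}(A_0+AP_+)$ flows to $\underline{+\infty}$.
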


With the corona theorem for $A_{0}+AP_{+}$, and the contractability of
$\mathfrak{M}(A_{0}+AP_{+})$, we prove a number of results, all of them concerning 
algebraic properties of $A_{0}+AP_{+}$:
\begin{enumerate}
\item $AP_{+}+A_{0}$ is a Hermite ring.
\item $AP_{+}+A_{0}$ is projective free.
\item $AP_{+}+A_{0}$ is not a GCD domain.
\item $AP_{+}+A_{0}$ is not a pre-Bezout domain.
\item $AP_{+}+A_{0}$ is not a coherent ring.
\end{enumerate}
For the definitions of the above algebraic properties,  we refer the reader to Section~\ref{prel}.
From the point of view of  applications, we now briefly mention the relevance of these results in the solution of the stabilization
problem in control theory:
\begin{enumerate}
 \item Hermiteness: since
the ring $A_{0}+AP_{+}$ is Hermite, if a transfer function $G$ has a left (or
right) coprime factorization, then $G$ has a doubly coprime factorization, and
the standard Youla parametrization yields all stabilizing controllers for $G,$
see \cite[Theorem 66, p. 347]{V}.
\item Projective freeness: since
the ring $A_{0}+AP_{+}$ is projective free, a plant is stabilizable if and only if
it admits a right (or left) coprime factorization, see \cite{Q}.
\item Pre-Bezout property: Every
transfer function $p\in\mathbb{F}(R)$ admits a coprime factorization if and
only if $R$ is a pre-Bezout domain, see \cite[Corollary 4]{Q2}.
\item GCD domain: Every
transfer function $p\in\mathbb{F}(R)$ admits a weak coprime factorization if
and only if $R$ is a GCD domain, see \cite[Corollary 3]{Q2}.
\item Coherence: For implications
of (the lack of) the coherence property we refer to
\cite{Q}.
\end{enumerate}
The paper is organized as follows:
\begin{enumerate}
 \item In Section 2, we introduce the relevant
notation  used throughout in the article.
\item In Section 3, we characterize the
maximal ideal space of $A_{0}+AP_{+}$.
\item In Section 4, we use the characterization of $\mathfrak{M}(A_{0}+AP_{+})$ to prove a
corona theorem for $A_{0}+AP_{+}$.
\item In Section 5, we prove that $A_{0}+AP_{+} $
is contractible and as corollaries we get that $A_{0}+AP_{+}$ is Hermite and
projective free.
\item In Section 6, we prove that $A_{0}+AP_{+}$ is not a GCD
domain.
\item In Section 7, that $A_{0}+AP_{+}$ is not a pre-Bezout domain.
\item Finally, in Section 8, we prove that $A_{0}+AP_{+}$ is not coherent.
\end{enumerate}

\section{Preliminaries and notation\label{prel}}

Throughout the article, we consider the Banach algebra
$A_{0}+AP_{+}$, defined in Definition \ref{snow}, with pointwise operations
and supremum norm. We point out that elements $f_{\scriptscriptstyle AP_{+}}$ in $AP_{+}$ can be
written%
\[
f_{\scriptscriptstyle AP_{+}}(s)=\sum_{k\geq 0}f_{k}e^{-st_{k}},
\]
where $t_{0}=0<t_{1},t_{2},\cdots $.

 To prove the corona theorem for $A_{0}+AP_{+}$, we
shall use Kronecker's approximation theorem, see for instance \cite[Chapter
23]{HW}.

\begin{theorem}
\label{kronecker}
Let $\{\tau_{1},\cdots ,\tau_{n}\}$ be a set of rationally
independent real numbers. Then, given $\varepsilon>0$ and $(\delta_{1}%
,\cdots ,\delta_{n})\in\mathbb{R}^{n},$ there exist $(m_{1},\cdots,m_{n}%
)\in\mathbb{Z}^{n}$ and $\xi\in\mathbb{R}$ such that%
\[
|\delta_{i}-\xi\tau_{i}-m_{i}|<\varepsilon\text{ \ \ \ for all }%
i\in\{1,\cdots,n\}.
\]
\end{theorem}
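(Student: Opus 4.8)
The statement is a continuous form of Kronecker's approximation theorem, and here is the route I would take. First I would recast the conclusion geometrically. Writing $\tau := (\tau_1,\dots,\tau_n)$ and identifying the torus $\mathbb{T}^n := \mathbb{R}^n/\mathbb{Z}^n$ with $[0,1)^n$ carrying the quotient sup-metric $d(x+\mathbb{Z}^n,y+\mathbb{Z}^n)=\max_i\min_{m_i\in\mathbb{Z}}|x_i-y_i-m_i|$, the system $|\delta_i-\xi\tau_i-m_i|<\varepsilon$ $(1\le i\le n)$, required to be solvable in $m_i\in\mathbb{Z}$ and $\xi\in\mathbb{R}$, says precisely that the point $\delta+\mathbb{Z}^n\in\mathbb{T}^n$ lies within $\varepsilon$ of a point of the one-parameter orbit $\Gamma:=\{\,\xi\tau+\mathbb{Z}^n:\xi\in\mathbb{R}\,\}$. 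Since $\delta=(\delta_1,\dots,\delta_n)$ and $\varepsilon>0$ are arbitrary, the theorem is equivalent to the assertion that $\Gamma$ is dense in $\mathbb{T}^n$.

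To prove density I would use Weyl's equidistribution criterion. The claim is that for every continuous $g:\mathbb{T}^n\to\mathbb{C}$ one has $\frac{1}{T}\int_0^T g(\xi\tau+\mathbb{Z}^n)\,d\xi\to\int_{\mathbb{T}^n}g\,dm$ as $T\to\infty$, where $m$ is Haar (Lebesgue) measure; density of $\Gamma$ is then immediate, since if $\Gamma$ missed an open ball $B$, a nonnegative continuous bump $g$ supported in $B$ with $\int_{\mathbb{T}^n}g\,dm>0$ would have all averages $0$, a contradiction. By linearity and the density of trigonometric polynomials in $C(\mathbb{T}^n)$ (Stone--Weierstrass), it suffices to check the criterion on characters $g=e_k$, where $e_k(x+\mathbb{Z}^n)=e^{2\pi i\langle k,x\rangle}$ for $k\in\mathbb{Z}^n$. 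For $k=0$ both sides equal $1$. For $k\neq 0$ one computes $\frac{1}{T}\int_0^T e^{2\pi i\xi\langle k,\tau\rangle}\,d\xi$, whose absolute value is at most $\frac{1}{\pi T|\langle k,\tau\rangle|}$ and hence tends to $0$ --- and here is the one place the hypothesis is used: $\langle k,\tau\rangle=k_1\tau_1+\cdots+k_n\tau_n\neq 0$, because $k\in\mathbb{Z}^n\setminus\{0\}$ and $\tau_1,\dots,\tau_n$ are rationally independent (an integer relation is a rational relation). Since $\int_{\mathbb{T}^n}e_k\,dm=0$ for $k\neq 0$, the criterion holds and the theorem follows.

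The parts I am not spelling out are routine: the identification of the displayed inequalities with $\varepsilon$-approximation on $\mathbb{T}^n$; the Stone--Weierstrass reduction from arbitrary continuous test functions to characters; and the elementary estimate for the oscillatory integral. The only genuine content is the non-vanishing $\langle k,\tau\rangle\neq 0$, i.e.\ the use of rational independence, and this is exactly the step that would (and must) fail without the hypothesis --- so it is the point to treat with care. As an aside, the same conclusion can be reached purely group-theoretically: $\Gamma$ is the image of the continuous homomorphism $\xi\mapsto\xi\tau+\mathbb{Z}^n$, hence a connected subgroup, so $\overline{\Gamma}$ is a subtorus of $\mathbb{T}^n$; were it proper it would be annihilated by some nontrivial character $e_k$, again forcing $\langle k,\tau\rangle=0$ and contradicting rational independence. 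I would nonetheless present the equidistribution version, since it is self-contained and avoids quoting the structure theory of compact abelian groups.
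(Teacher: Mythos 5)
Your proof is correct. Note, however, that the paper does not prove Theorem~\ref{kronecker} at all: it is quoted as a known result with the reference \cite[Chapter 23]{HW}, so there is no in-paper argument to compare with. Your route --- identifying the conclusion with density of the linear flow $\xi\mapsto\xi\tau+\mathbb{Z}^n$ in $\mathbb{T}^n$ and establishing equidistribution via the Weyl criterion, with Stone--Weierstrass reducing to characters and rational independence entering exactly through $\langle k,\tau\rangle\neq 0$ for $k\in\mathbb{Z}^n\setminus\{0\}$ --- is a standard, self-contained proof and in fact yields more than density (equidistribution of the orbit). It differs from the classical treatments in Hardy and Wright, which argue by induction on $n$ or by Bohr's trick with the auxiliary product $\prod_i\bigl(1+e^{2\pi i(\xi\tau_i-\delta_i)}\bigr)$; those avoid the (mild) functional-analytic ingredients you use, while your argument is shorter and isolates cleanly the one place the hypothesis is needed. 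Your closing remark on the group-theoretic alternative (closure of the orbit is a subtorus, proper subtori are killed by a nontrivial character) is also correct. Since the paper only needs the statement as a black box for the corona theorem, either proof would serve; yours is a legitimate replacement for the citation.
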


We continue with a few definitions which explain the results presented in the introduction.

\begin{definition}
\label{contract}A topological space $X$ is said to be \emph{contractible} if
there exist a continuous map $H:X\times\lbrack0,1]\rightarrow X$ and an
$x_{0}\in X$ such that for all $x\in X,\ H(x,0)=x$ and $H(x,1)=x_{0}.$
\end{definition}

\begin{definition}
Let $R$ be a ring with an identity element. A matrix $f\in R^{n\times k}$ is
called {\em left invertible} if there exists a $g\in R^{k\times n}$ such that
$gf=I_{k}.$ The ring $R$ is called a \emph{Hermite ring} if for all
$k,n\in\mathbb{N}$, $k<n,$ and for all left invertible matrices $f\in
R^{n\times k}$, there exist $F,G\in R^{n\times n}$ such that $GF=I_{n}$ and
$F_{ij}=f_{ij}$ for all $i\in\{1,\cdots,n\},\ j\in\{1,\cdots,k\}$.
\end{definition}

\begin{definition}
Let $R$ be a commutative ring with identity. Then $R$ is
\emph{projective free} if every finitely generated projective $R$-module is
free. Recall that an $R$-module $M$ is called
\begin{enumerate}
\item  { \em  free} if $M$ $\cong$ $R^{d}$ for some integer
$d\geq0$;
\item  {\em projective} if there exists an $R$-module $N$ and
an integer $d\geq0$ such that $M\oplus N$ $\cong R^{d}$.
\end{enumerate}
\end{definition}

It can be shown that every projective-free ring is Hermite, see for example 
\cite{BS}. 

\begin{definition}
Let $R$ be an integral domain, that is, a commutative unital ring having no
divisors of zero. Then
\begin{enumerate}
\item An element $d\in R$ is called a {\em greatest common divisor} (gcd)
of $a,b\in R$ if it is a divisor of $a$ and $b$, and, moreover, if $k$ is
another divisor, then $k$ divides $d$.

\item The integral domain $R$ is said to be a \emph{GCD domain} if
for all $a,b\in R,$ there exists a greatest common divisor $d$ of $a,b$.

\item  The integral domain $R$ is said to be a \emph{pre-Bezout
domain} if for every $a,b\in R$ for which there exists a gcd $d$, there exist
$x,y\in R$ such that $d=xa+yb.$

\item  The ring $R$ is called \emph{coherent} if for any pair $(I,J)$
of finitely generated ideals in $R$, their intersection $I\cap J$ is finitely
generated again.
\end{enumerate}
\end{definition}

We also recall the definition of a bounded approximate identity, which
will play an important role when we prove that $A_{0}+AP_{+}$ is not a GCD
domain, not a pre-Bezout domain and not coherent.

\begin{definition}
\label{AI}Let $R$ be a commutative Banach algebra without identity element.
Then $R$ has a \emph{bounded left approximate identity} if there exists a
bounded sequence $(e_{n})_{n\geq1}$ of elements $e_{n}\in R$ such that for all
$f\in R$, $\lim_{n\rightarrow\infty}\|e_{n}f-f\|_{\mathcal{\infty}}=0$.
\end{definition}

\section{The maximal ideal space of $A_{0}+AP_{+}$}

In this section we give a characterization of  the maximal ideal
space of $A_{0}+AP_{+}$. Before we state this theorem, we need to introduce
some notation. For $s\in\mC_{\scriptscriptstyle \geq 0}$ let $\underline{s}$ denote
point evaluation at $s$, that is, $\underline{s}(f)=f(s)$, $f\in A_{0}+AP_{+}$.

\begin{definition}
$\chi:\mathbb{R}\rightarrow\mathbb{C}$ is called a {\em character} if
 $
|\chi(t)|=1$ and $\chi(t+\tau)=\chi(t)\chi(\tau)$ for
all $t,\tau\in\mathbb{R}$.
\end{definition}

\begin{theorem}
\label{maxideal}If $\varphi$ is in the maximal ideal space of $A_{0}+AP_{+}$,
then one of the following three statements holds:

\begin{itemize}
\item[(a)] There exists an $s\in\mathbb{C}_{\scriptscriptstyle \geq0}$ such that $\varphi(f)=f(s)
$ for all $f\in A_{0}+AP_{+}$.

\item[(b)] We have $\varphi=\underline{+\infty}$, that is,
$$
\varphi(f)= \underline{+\infty}(f):=\displaystyle \lim_{s\rightarrow
\infty}f(s)=f_{0},
$$
for all $f=f_{\scriptscriptstyle A_{0}}+\displaystyle \sum_{k\geq0}f_{k}e^{-\;\! \cdot\;\! t_{k}}\in
A_{0}+AP_{+}$,  where $f_{\scriptscriptstyle A_{0}}\in A_{0}$.

\item[(c)] There exist a $\sigma\geq0$ and a character $\chi$ such that
$$
\varphi(f)=\displaystyle \sum_{k\geq0}%
f_{k}e^{-\sigma t_{k}}\chi(t_{k}),
$$ for
all $f=\displaystyle \sum_{k\geq0}f_{k}e^{-\;\! \cdot\;\! t_{k}}+f_{\scriptscriptstyle A_{0}}\in A_{0}+AP_{+}, $ with
$f_{\scriptscriptstyle A_{0}}\in A_{0}$.
\end{itemize}
\end{theorem}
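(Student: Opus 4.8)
The plan is to split according to whether or not the character $\varphi$ annihilates the closed ideal $A_{0}$ of $A_{0}+AP_{+}$. Two structural facts make this dichotomy work. First, $A_{0}$ is a closed ideal: it is closed because a uniform limit of functions that are holomorphic in $\mC_{\scriptscriptstyle >0}$, continuous on $\mC_{\scriptscriptstyle \geq 0}$ and vanish at infinity retains all three properties; and it is an ideal because the product of an element of $A_{0}$ with a bounded function that is holomorphic in $\mC_{\scriptscriptstyle >0}$ and continuous on $\mC_{\scriptscriptstyle \geq 0}$ again lies in $A_{0}$. Second, $A_{0}\cap AP_{+}=\{0\}$: an element of $AP_{+}$ restricts on the imaginary axis to a Bohr almost periodic function, and such a function that tends to $0$ at infinity vanishes identically (its set of $\varepsilon$-almost periods is relatively dense). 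Hence the natural map $AP_{+}\to(A_{0}+AP_{+})/A_{0}$ is an isomorphism of Banach algebras, so any $\varphi\in\mathfrak{M}(A_{0}+AP_{+})$ that kills $A_{0}$ is effectively a character of $AP_{+}$. Note also that $A_{0}+AP_{+}$ is unital ($1\in AP_{+}$), so $\varphi(1)=1$.

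\textbf{Case 1: $\varphi|_{A_{0}}\neq0$.} Then $\varphi|_{A_{0}}$ is a nonzero multiplicative functional on $A_{0}$. Via a Cayley transform $\mC_{\scriptscriptstyle \geq 0}\cup\{\infty\}\to\overline{\mathbb{D}}$ the algebra $A_{0}$ is isometrically isomorphic to the codimension-one ideal $\{g\in A(\mathbb{D}):g(q_{0})=0\}$ of the disc algebra, $q_{0}$ being the image of $\infty$. Extending a character $\psi$ of this ideal to $A(\mathbb{D})$ by $g\mapsto\psi(g-g(q_{0}))+g(q_{0})$ (a short computation shows this is a character) and using $\mathfrak{M}(A(\mathbb{D}))=\overline{\mathbb{D}}$, one finds that the nonzero multiplicative functionals of $A_{0}$ are exactly the evaluations at the points of $\mC_{\scriptscriptstyle \geq 0}$. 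Thus $\varphi(g)=g(s)$ for all $g\in A_{0}$ and some $s\in\mC_{\scriptscriptstyle \geq 0}$. Choosing $g\in A_{0}$ with $g(s)\neq0$ (e.g.\ $g(s')=(1+s')^{-1}$) and using that $fg\in A_{0}$ for every $f\in A_{0}+AP_{+}$, multiplicativity gives $\varphi(f)g(s)=\varphi(fg)=f(s)g(s)$, hence $\varphi(f)=f(s)$. This is alternative (a).

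\textbf{Case 2: $\varphi|_{A_{0}}=0$.} Then $\varphi(f)=\psi(f_{\scriptscriptstyle AP_{+}})$ for a character $\psi$ of $AP_{+}$. Write $e_{t}(s):=e^{-st}$ for $t\geq0$, so that $e_{0}=1$, $e_{t}e_{\tau}=e_{t+\tau}$, and $\{e_{t}\}_{t\geq0}$ generates $AP_{+}$; put $h(t):=\psi(e_{t})$. Then $h\colon([0,\infty),+)\to(\overline{\mathbb{D}},\cdot)$ is a homomorphism with $h(0)=1$ and $|h(t)|\leq\|e_{t}\|_{\infty}=1$, and from $|h(t+\tau)|=|h(t)|\,|h(\tau)|\leq|h(t)|$ the function $t\mapsto|h(t)|$ is non-increasing. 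If $h(t_{0})=0$ for some $t_{0}>0$, then $h(t)=0$ for every $t>0$ (pick $n$ with $nt\geq t_{0}$; then $|h(t)|^{n}=|h(nt)|=0$), while $h(0)=1$; hence, by continuity of $\psi$, $\psi(f_{\scriptscriptstyle AP_{+}})=f_{0}$, the coefficient of the exponent $t_{0}=0$, i.e.\ $\varphi=\underline{+\infty}$, alternative (b). Otherwise $h$ never vanishes, so $t\mapsto\log|h(t)|$ is additive and monotone, hence of the form $-\sigma t$ with $\sigma\geq0$; then $\chi(t):=e^{\sigma t}h(t)$ is unimodular and multiplicative on $[0,\infty)$, and extending by $\chi(-t):=\overline{\chi(t)}$ gives a character $\chi\colon\mR\to\mT$ with $\psi(e_{t})=e^{-\sigma t}\chi(t)$. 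By linearity on the generating exponentials and continuity of $\psi$, $\varphi(f)=\psi(f_{\scriptscriptstyle AP_{+}})=\sum_{k\geq0}f_{k}e^{-\sigma t_{k}}\chi(t_{k})$, which is alternative (c).

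\textbf{Main obstacle.} The only substantive part is Case 2. Because the semigroup $\{e_{t}\}_{t\geq0}$ is far from norm-continuous, $h$ is a priori merely an abstract semigroup homomorphism into $\overline{\mathbb{D}}$, and solutions of the corresponding Cauchy functional equation are generally pathological; the point that saves the argument is that contractivity forces $|h|$ to be monotone, which upgrades $\log|h|$ to a genuine linear function and then lets one peel off the unimodular character $\chi$. Case 1, and the identification $\mathfrak{M}(A_{0})=\mC_{\scriptscriptstyle \geq 0}$ it relies on, are routine.
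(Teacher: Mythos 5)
Your argument is correct, and its skeleton coincides with the paper's: decide whether $\varphi$ annihilates the ideal $A_{0}$, use the disc-algebra identification (via the Cayley transform) together with the ideal property of $A_{0}$ to get point evaluations in the non-annihilating case, and reduce the annihilating case to a character of $AP_{+}$. The genuine difference is in how that last case is settled. The paper restricts $\varphi$ to $A_{0}+\mC$, reads off $\mathfrak{M}(A_{0}+\mC)=\underline{\mC_{\scriptscriptstyle\geq0}}\cup\{\underline{+\infty}\}$, and then simply cites Arens--Singer for the classification of characters of $AP_{+}$, which delivers alternative (c) (alternative (b) having been excluded at the outset). You instead prove that classification from scratch: you study the semicharacter $h(t)=\psi(e^{-\;\!\cdot\;\!t})$ of the additive semigroup $[0,\infty)$, observe that contractivity makes $|h|$ non-increasing so that the additive function $\log|h|$ is genuinely linear (this is the right observation; without monotonicity or some measurability the Cauchy equation would admit pathological solutions), split off the unimodular part as a character of $\mR$ (your reflection $\chi(-t)=\overline{\chi(t)}$ does extend multiplicatively), and recover alternative (b) as the degenerate semicharacter vanishing for $t>0$, where $\psi$ becomes the Bohr mean $f\mapsto f_{0}$. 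What your route buys is self-containedness and a uniform derivation of (b) and (c) from one dichotomy; what the citation buys the paper is brevity and, implicitly, the justification of the series formula in (c) for general elements of $AP_{+}$ — your final step ``by linearity on the generating exponentials and continuity of $\psi$'' identifies $\lim_n\psi(P_n)$ with $\sum_{k}f_{k}e^{-\sigma t_{k}}\chi(t_{k})$, which tacitly requires approximating polynomials whose coefficients converge to the Bohr--Fourier coefficients (Bochner--Fej\'er summation); this is the same level of informality as the paper's own use of the series representation of $AP_{+}$ elements, so it is not a gap relative to the paper, but it is worth flagging. Your small auxiliary observations ($A_{0}$ closed, $A_{0}\cap AP_{+}=\{0\}$, the character-extension from the codimension-one ideal of $A(\mathbb{D})$) are all sound and make the uniqueness of the decomposition explicit, which the paper leaves implicit.
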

\begin{proof}
We note that via the conformal map
\[
s\rightarrow\frac{s-1}{s+1},
\]
we can map $\mathbb{D}$ onto $\mathbb{C}_{\scriptscriptstyle \geq0}.$ In particular, this means
that
\[
f\in A(\mathbb{D})\Longleftrightarrow f\left(  \frac{s-1}{s+1}\right)  \in
A_{0}+\mathbb{C}.
\]
Since the maximal ideal space of $A(\mathbb{D})$ is
$\overline{\mathbb{D}}$ (point evaluations on the closed unit
disc; see for instance \cite[p. 283]{Rud91}), it follows that the maximal ideal space of
$A_{0}+\mathbb{C}$ is $\underline{\mathbb{C}_{\scriptscriptstyle \geq0}}\cup\{\underline{+\infty
}\}.$

Suppose that $\varphi\in\mathfrak{M}(A_{0}+AP_{+})\ \backslash
\ (\underline{\mathbb{C}_{\scriptscriptstyle \geq0}}\cup\{\underline{+\infty}\}).$ First, we
shall show that if $f_{\scriptscriptstyle A_0}\in A_{0}$, then $\varphi(f_{\scriptscriptstyle A_0})=0$. Assume on the
contrary that $\varphi(f_{\scriptscriptstyle A_0})\neq 0$.  Then $\varphi|_{A_{0}+\mathbb{C}}%
\in\mathfrak{M}(A_{0}+C)=\underline{\mathbb{C}_{\scriptscriptstyle \geq0}}\cup
\{\underline{+\infty}\}$. Note that if $\varphi|_{A_{0}+\mathbb{C}}%
=\underline{+\infty}$, then
$$
\varphi(f_{\scriptscriptstyle A_0})=\displaystyle \lim_{s\rightarrow+\infty}
f_{\scriptscriptstyle A_0}(s)=0,
$$
which is a contradiction. So $\varphi|_{A_{0}+\mathbb{C}}\neq \underline{+\infty}$,
and there exists $s_{0}\in\mathbb{C}_{\scriptscriptstyle \geq0}$
such that $\varphi|_{A_{0}+\mathbb{C}}=\underline{s_{0}}$. Moreover, we note
that $A_{0}$ is an ideal in $A_{0}+AP_{+}$. Thus for $F\in A_{0}+AP_{+}$, we have
\[
\varphi(F)=\frac{\varphi(F\cdot f_{0})}{\varphi(f_{0})}=\frac{(F\cdot
f_{0})(s_{0})}{f_{0}(s_{0})}=F(s_{0})=\underline{s_{0}}(F).
\]
But this is a contradiction since $\varphi\in\mathfrak{M}%
(A_{0}+AP_{+})\ \backslash\ (\underline{\mathbb{C}_{\scriptscriptstyle \geq0}}\cup
\{\underline{+\infty}\})$.

Hence $\varphi|_{AP_{+}}$ is a nontrivial
complex homomorphism. From the known characterization of the maximal ideal space of $AP_+$
(see for example \cite[Theorem 4.1]{AS56}), we obtain that there exist a $\sigma\geq0,$ and a character  $\chi$ such that
$\varphi|_{AP_{+}}=\varphi_{\sigma,\chi}$, that is,
\[
\varphi_{\sigma,\chi}\left(  \sum_{k\geq0}f_{k}e^{-\;\! \cdot\;\!  t_{k}}\right)
=\sum_{k\geq0}f_{k}e^{-  \sigma t_{k}}\chi(t_{k}).
\]
 This completes the proof.
\end{proof}

\section{Corona theorem for $A_{0}+AP_{+}$}

We will now give a proof of the corona theorem, Theorem~\ref{coronathm}. The
proof relies on the characterization of the maximal ideal space of $A_{0}%
+AP_{+}$ provided in Theorem~\ref{maxideal}.

\begin{proof}[Proof of Theorem~\ref{coronathm}] Suppose that the corona condition holds, that is, that
there exist $\delta>0$ such that $|n(s)|+|d(s)|\geq\delta>0$ for all
$s\in\mathbb{C}_{\geq0}.$ Assume that (1) in Theorem~\ref{coronathm}
does not hold. Then the ideal $\langle n, d \rangle$ generated by $n,d$ is
not the whole ring, and so there is a maximal ideal which contains it.  Thus
there exists $\varphi\in\mathfrak{M}(A_{0}+AP_{+})$ such that
\[
\left\langle n,d\right\rangle \subset\ker(\varphi)\subset A_{0}+AP_{+}\text{.}%
\]
In particular, $\varphi(n)=\varphi(d)=0$. By Theorem
\ref{maxideal} we know that the maximal ideal space of $A_{0}+AP_{+}$ consists
of three different types of homomorphisms. However, $\varphi$ cannot be of
type (a), since then there exists $s\in\mathbb{C}_{\scriptscriptstyle \geq0}$ such that%
\[
\varphi(n)=n(s)=0 \textrm{ and } \varphi(d)= d(s)=0,
\]
which contradicts the corona condition. Moreover, $\varphi$ cannot be of type
(b), since then 
\[
\varphi(n)=\lim_{s\rightarrow+\infty} n(s)=0 \textrm{ and } \varphi(d)=\lim_{s\rightarrow+\infty}d(s) =0,
\]
which again violates the corona condition. Therefore, $\varphi$ must be of
type (c). Let 
\begin{align*}
n &  =n_{\scriptscriptstyle A_{0}}+n_{\scriptscriptstyle AP_{+}}=n_{\scriptscriptstyle A_{0}}(s)+\displaystyle\sum_{k\geq0}n_{k}e^{-st_{k}%
}\ \ \ \text{and}\\
d &  =d_{\scriptscriptstyle A_{0}}+d_{\scriptscriptstyle AP_{+}}=d_{A_{0}}(s)+\displaystyle \sum_{k\geq0}d_{k}e^{-st_{k}}.
\end{align*}
There exist $\sigma\geq0$ and a character $\chi$ such that,
\begin{align*}
0 &  =\varphi(n)=\sum_{k\geq0}n_{k}e_{{}}^{-\sigma t_{k}}\chi(t_{k}),\text{
\ \ and}\\
0 &  =\varphi(d)=\sum_{k\geq0}d_{k}e^{-\sigma\tau_{k}}\chi(\tau_{k}).
\end{align*}
Since $n$, $d\in A_{0}+AP_{+}$, for every choice of $\delta,M>0,$ there exists
$N\in\mathbb{N}$ such that%
\begin{align*}
\left\Vert \sum_{k=0}^{N}n_{k}e^{-st_{k}}-n_{AP_{+}}\right\Vert _{\infty}  &
<\frac{\delta}{M},\\
\left\Vert \sum_{k=0}^{N}d_{k}e^{-s\tau_{k}}-d_{AP_{+}}\right\Vert _{\infty}
& <\frac{\delta}{M}.
\end{align*}
Therefore
\begin{align*}
\left\vert \varphi\left(  \sum_{k=0}^{N}n_{k}e^{-st_{k}}-n_{AP_{+}}\right)
\right\vert  & \leq\left\Vert \varphi\right\Vert \cdot\left\Vert \sum
_{k=0}^{N}n_{k}e^{-st_{k}}-n_{AP_{+}}\right\Vert _{\infty}\leq\frac{\delta}%
{M},\\
\left\vert \varphi\left(  \sum_{k=0}^{N}d_{k}e^{-s\tau_{k}}-d_{AP_{+}}\right)
\right\vert  & \leq\left\Vert \varphi\right\Vert \cdot\left\Vert \sum
_{k=0}^{N}d_{k}e^{-s\tau_{k}}-d_{AP_{+}}\right\Vert _{\infty}\leq\frac{\delta
}{M}.
\end{align*}
In particular, this means that
\[
\left\vert \sum_{k=0}^{N}n_{k}e_{{}}^{-\sigma t_{k}}\chi(t_{k})\right\vert
<\frac{\delta}{M}\text{ \ \ and \ \ }\left\vert \sum_{k=0}^{N}d_{k}e_{{}%
}^{-\sigma\tau_{k}}\chi(\tau_{k})\right\vert <\frac{\delta}{M}.
\]
We shall now show that there exists a $\omega_{\ast}\in\mathbb{R}$ such that
\[
\left\vert \sum_{k=0}^{N}n_{k}e_{{}}^{-\sigma t_{k}}e^{-i\omega_{\ast}t_{k}%
}\right\vert <\frac{\delta}{2M}\text{ \ \ and \ \ }\left\vert \sum_{k=0}%
^{N}d_{k}e_{{}}^{-\sigma\tau_{k}}e^{-i\omega_{\ast}\tau_{k}}\right\vert
<\frac{\delta}{2M}.
\]
Choose $\xi_{1},\cdots,\xi_{K}$ rationally independent real numbers such that
 the real numbers $t_{1},\cdots,t_{N}$, $\tau_1,\cdots, \tau_N$ can be written as
\[
t_{j}=\sum_{k=1}^{K}\alpha_{jk}\xi_{k} \textrm{ and }\tau_{j}=\sum_{k=1}^{K}\widetilde{\alpha}_{jk}\xi_{k} , \quad j=1, \dots N,
\]
for appropriate integers $\alpha_{jk}, \widetilde{\alpha}_{jk}$. Since $|\chi(t)|=1$ for all $t\in\mathbb{R},$ we
may set $\chi(\xi_{k})=e^{2\pi i\delta_{k}}$ for some $\delta_{k}\in
\mathbb{R}$. Then,
\[
\chi(t_{k})=\exp\left(  2\pi i\sum_{k=1}^{K}\alpha_{jk}\delta_{k}\right) \textrm{ and }
\chi(\tau_{k})=\exp\left(  2\pi i\sum_{k=1}^{K}\widetilde{\alpha}_{jk}\delta_{k}\right)  .
\]
By Theorem \ref{kronecker}, for all $\eta>0$ there exist a $\beta\in
\mathbb{R}$ and real numbers $m_{1},\cdots ,m_{N}$ such that%
\[
|\beta\xi_{i}-\delta_{i}-m_{i}|<\eta,\text{ \ \ for }i=1,\cdots ,N.
\]
Hence
\begin{align*}
\left\vert \chi(t_{k})-\exp(2\pi i\beta t_{k})\right\vert  &  =\left\vert
\exp\left(  2\pi i\sum_{k=1}^{K}\alpha_{jk}\delta_{k}\right)  -\exp\left(  2\pi
i\beta\sum_{k=1}^{K}\alpha_{jk}\xi_{k}\right)  \right\vert \\
&  \leq2\pi\sum_{k=1}^{K}|\alpha_{jk}|\eta.
\end{align*}
Similarly, $\displaystyle
\left\vert \chi(\tau_{k})-\exp(2\pi i\beta \tau_{k})\right\vert \leq2\pi\sum_{k=1}^{K}|\widetilde{\alpha}_{jk}|\eta$. Let
 $
C=\displaystyle \sup_{j\in\{1,\ldots ,N\}}2\pi\sum_{k=1}^{K}\max\{|\alpha_{jk}|,|\widetilde{\alpha}_{jk}|\}$.
Then for $\eta$
small enough,
\[
\left\vert \sum_{k=0}^{N} \left(a_{k}\chi(t_{k})-a_{k}e^{2\pi i\beta t_{k}%
}\right) \right\vert \leq C\eta\sum_{k=0}^{N}|a_{k}|<\frac{\delta}{2M}.
\]
Using this we obtain
\begin{align*}
\left\vert n_{\scriptscriptstyle AP_{+}}(\sigma+i\omega_{\ast})\right\vert  & \leq\left\vert
n_{\scriptscriptstyle AP_{+}}(\sigma+i\omega_{\ast})-\sum_{k=0}^{N}n_{k}e_{{}}^{-(\sigma
+i\omega_{\ast})t_{k}}\right\vert +\left\vert \sum_{k=0}^{N}n_{k}e_{{}%
}^{-(\sigma+i\omega_{\ast})t_{k}}\right\vert \\
& \leq\frac{\delta}{M}+\frac{\delta}{2M}=\frac{3\delta}{2M}.
\end{align*}
and also, $\left\vert d_{\scriptscriptstyle AP_{+}}(\sigma+i\omega_{\ast})\right\vert \leq \frac{3\delta}{2M}$.
But the choice of $M$ was arbitrary, and so, the above shows that
 for every $\delta>0,$ there exists $\omega_{\ast}\in
\mathbb{R}$ such that%
\begin{equation}
\left\vert \sum_{k=0}^{\infty}n_{k}e_{{}}^{-\sigma t_{k}}e^{-i\omega_{\ast
}t_{k}}\right\vert <\frac{\delta}{8}\text{ \ \ \ and \ \ }\left\vert
\sum_{k=0}^{\infty}d_{k}e_{{}}^{-\sigma\tau_{k}}e^{-i\omega_{\ast}\tau_{k}%
}\right\vert <\frac{\delta}{8}.\label{p1}%
\end{equation}
Hence $|n_{\scriptscriptstyle AP_{+}}(\sigma+i\omega_{\ast})|+|d_{\scriptscriptstyle AP_{+}}%
(\sigma+i\omega_{\ast})|<\delta/4.$ Now, we note that both $n_{\scriptscriptstyle AP_{+}}%
(\sigma+i\;\! \cdot\;\! )$ and $d_{\scriptscriptstyle AP_{+}}(\sigma+i\;\! \cdot\;\! )$ are almost periodic
since%
\[
\omega\longmapsto n_{\scriptscriptstyle AP_{+}}(\sigma+i\omega)=\sum_{k=0}^{\infty}%
\widetilde{n}_{k}e^{-i\omega t_{k}}%
\]
and%
\[
\omega\longmapsto d_{\scriptscriptstyle AP_{+}}(\sigma+i\omega)=\sum_{k=0}^{\infty}%
\widetilde{d}_{k}e^{-i\omega t_{k}},
\]
for $\widetilde{n}_{k}=n_{k}e^{-\sigma t_{k}},\widetilde{d}_{k}=d_{k}%
e^{-\sigma\tau_{k}}$. But we recall the classical result that for any almost periodic function $F$,
for every $\epsilon>0$
there exists $L>0$ such that every interval of length $L$ has a $\lambda
=\lambda(L)$ such that for all $\omega\in\mathbb{R}$%
\[
|F(\omega)-F(\omega+\lambda)|<\epsilon.
\]
We can use this fact to construct a sequence $(\omega_{n})_{n=1}^{\infty}$ such
that $\omega_{n}\rightarrow+\infty$ as $n\rightarrow\infty$ and%
\begin{align}
\left\vert n_{\scriptscriptstyle AP_{+}}(\sigma+i\omega_{\ast})-n_{\scriptscriptstyle AP_{+}}(\sigma+i\omega
_{n})\right\vert  &  <\frac{\delta}{8},\nonumber\\
\left\vert d_{\scriptscriptstyle AP_{+}}(\sigma+i\omega_{\ast})-d_{\scriptscriptstyle AP_{+}}(\sigma+i\omega
_{n})\right\vert  &  <\frac{\delta}{8}.\label{p2}%
\end{align}
Moreover, by definition of $A_{0},$
\begin{equation}
\lim_{n\rightarrow+\infty}n_{\scriptscriptstyle A_{0}}(\sigma+i\omega_{n})=\lim_{n\rightarrow
+\infty}d_{\scriptscriptstyle A_{0}}(\sigma+i\omega_{n})=0.\label{p3}%
\end{equation}
Thus, taking $s=\sigma+i\omega_{n}$ in the corona condition we get%
\[
|n(s)|+|d(s)|\leq\underset{I}{\underbrace{|n_{\scriptscriptstyle A_{0}}(s)|+|d_{\scriptscriptstyle A_{0}}(s)|}%
}+\underset{II}{\underbrace{|n_{\scriptscriptstyle AP_+}(s)|+|d_{\scriptscriptstyle AP_+}(s)|}}.
\]
But using (\ref{p3}) we can make $I$ as small as we please, and using
(\ref{p1})-(\ref{p2}), $II$ is smaller than $\delta/2.$ That is,%
\[
|n(s)|+|d(s)|\leq\delta,
\]
which is a contradiction. Thus, (1) in Theorem~\ref{coronathm} must hold whenever (2) in
Theorem~\ref{coronathm} (the corona condition) holds. That (1) implies (2) in Theorem~\ref{coronathm}
is obvious and this concludes the proof.
\end{proof}

\section{Contractability of $\mathfrak{M}(A_{0}+AP_{+})$}

We will show that the maximal ideal space $\mathfrak{M}%
(A_{0}+AP_{+})$ of $A_{0}+AP_{+}$  is contractible (Theorem~\ref{Contractible}). Then, as
corollaries, we obtain that $A_{0}+AP_{+}$ is Hermite and projective free.

To prove that $\mathfrak{M}(A_{0}+AP_{+})$ is contractible, we will proceed in
several steps, and we start with a few lemmas which mainly concern the
topology of $\mathfrak{M}(A_{0}+AP_{+})$.

\begin{lemma}
\label{close}The set $\mathfrak{M}(A_{0}+AP_{+})\backslash
\underline{\mathbb{C}_{\scriptscriptstyle \geq 0}}$ is closed in $\mathfrak{M}(A_{0}+AP_{+})$.
\end{lemma}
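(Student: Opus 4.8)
The plan is to realize $\mathfrak{M}(A_{0}+AP_{+})\setminus\underline{\mathbb{C}_{\scriptscriptstyle\geq0}}$ as the zero set of the Gelfand transform of a single, explicit element of $A_{0}+AP_{+}$; closedness is then immediate, because Gelfand transforms are continuous for the (weak-$*$) Gelfand topology on $\mathfrak{M}(A_{0}+AP_{+})$.

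Concretely, I would take $g\in A_{0}$ given by $g(s)=\dfrac{1}{s+1}$. This does belong to $A_{0}$: it is holomorphic on $\mathbb{C}_{\scriptscriptstyle>0}$, continuous on $\mathbb{C}_{\scriptscriptstyle\geq0}$ (its only singularity is the pole at $s=-1$, and $\mathrm{Re}(-1)=-1<0$), and $g(s)\to0$ as $s\to\infty$. The two features of $g$ that matter are that $g$ has \emph{no zeros} in $\mathbb{C}_{\scriptscriptstyle\geq0}$, and that $g\in A_{0}$.

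I then claim
\[
\mathfrak{M}(A_{0}+AP_{+})\setminus\underline{\mathbb{C}_{\scriptscriptstyle\geq0}}=\{\varphi\in\mathfrak{M}(A_{0}+AP_{+}):\varphi(g)=0\}.
\]
For the inclusion $\supseteq$: if $\varphi(g)=0$, then $\varphi$ cannot equal any point evaluation $\underline{s}$ with $s\in\mathbb{C}_{\scriptscriptstyle\geq0}$, since $\underline{s}(g)=g(s)=\tfrac{1}{s+1}\neq0$; hence $\varphi\notin\underline{\mathbb{C}_{\scriptscriptstyle\geq0}}$. For the inclusion $\subseteq$: if $\varphi\notin\underline{\mathbb{C}_{\scriptscriptstyle\geq0}}$, then by Theorem~\ref{maxideal} $\varphi$ is of type (b) or of type (c); in case (b) one has $\varphi(g)=\lim_{s\to\infty}g(s)=0$, and in case (c) the function $g$ lies in $A_{0}$ (its $AP_{+}$-component is $0$, i.e.\ all coefficients $f_{k}$ vanish), so the type-(c) formula gives $\varphi(g)=0$.

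Granting the claim, the map $\varphi\mapsto\varphi(g)$ is the Gelfand transform $\widehat{g}:\mathfrak{M}(A_{0}+AP_{+})\to\mathbb{C}$, which is continuous, so $\widehat{g}^{\,-1}(\{0\})=\mathfrak{M}(A_{0}+AP_{+})\setminus\underline{\mathbb{C}_{\scriptscriptstyle\geq0}}$ is closed. I do not anticipate any genuine obstacle; the only things requiring (routine) checking are that $g\in A_{0}$ and that $g$ separates the point evaluations from the homomorphisms of types (b) and (c) — both of which are handled by the elementary remarks above. Slightly more conceptually, the same argument shows that $\mathfrak{M}(A_{0}+AP_{+})\setminus\underline{\mathbb{C}_{\scriptscriptstyle\geq0}}$ is precisely the hull of the closed ideal $A_{0}$ of $A_{0}+AP_{+}$, namely $\bigcap_{h\in A_{0}}\widehat{h}^{\,-1}(\{0\})$, and hence closed; the single-function version is just the economical packaging of this.
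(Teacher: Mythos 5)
Your proof is correct and is essentially the paper's argument: the paper shows $\underline{\mathbb{C}_{\scriptscriptstyle\geq0}}$ is open by writing it as $\bigcup_{f_{\scriptscriptstyle A_0}\in A_0}\{\varphi:|\widehat{f}_{\scriptscriptstyle A_0}(\varphi)|>0\}$, i.e.\ it identifies the complement with the hull of $A_0$ exactly as you do, relying on the same facts from Theorem~\ref{maxideal} (type (b) and (c) homomorphisms annihilate $A_0$, point evaluations do not) and on continuity of the Gelfand transform. Your only deviation is the economical observation that the single zero-free function $1/(s+1)\in A_0$ already suffices, which is a harmless streamlining of the same idea.
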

\begin{proof}
It is enough to show that $\underline{\mathbb{C}_{\scriptscriptstyle \geq0}}$ is open. Let
$\widehat{\cdot}$ denote the Gelfand transform. As a consequence of Theorem
\ref{maxideal}, $\varphi\in\underline{\mathbb{C}_{\scriptscriptstyle \geq0}}$ if and only if
there exists $f_{\scriptscriptstyle A_{0}}\in A_{0}$ such that%
\[
\left\vert \widehat{f}_{\scriptscriptstyle A_{0}}(\varphi)\right\vert =\left\vert \varphi\left(
\widehat{f}_{\scriptscriptstyle A_{0}}\right)  \right\vert >0.
\]
Therefore,
\[
\underline{\mathbb{C}_{\scriptscriptstyle \geq0}}=\bigcup\limits_{f_{\scriptscriptstyle A_{0}}\in A_{0}}\left\{
\varphi\in\mathfrak{M}(A_{0}+AP_{+}):\left\vert \widehat{f}_{\scriptscriptstyle A_{0}}%
(\varphi)\right\vert >0\right\}  ,
\]
and since the union of open sets is open, $\underline{\mathbb{C}_{\scriptscriptstyle \geq0}}$ is
open, which completes the proof.
\end{proof}

\begin{lemma}
\label{homeo}$\underline{\mathbb{C}_{\scriptscriptstyle \geq0}}$ is homeomorphic to
$\mathbb{C}_{\scriptscriptstyle \geq0}.$
\end{lemma}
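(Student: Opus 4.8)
The plan is to exhibit the evaluation map
\[
\iota\colon\mathbb{C}_{\scriptscriptstyle\geq0}\longrightarrow\mathfrak{M}(A_{0}+AP_{+}),\qquad \iota(s)=\underline{s},
\]
as a homeomorphism onto $\underline{\mathbb{C}_{\scriptscriptstyle\geq0}}$, the latter carrying the subspace topology induced from the Gelfand topology. By the very definition of the notation $\underline{\mathbb{C}_{\scriptscriptstyle\geq0}}$, the image of $\iota$ is precisely $\underline{\mathbb{C}_{\scriptscriptstyle\geq0}}$, so it remains to verify that $\iota$ is injective, continuous, and has continuous inverse. The single tool that makes all three points routine is the function $f_{0}(s):=\frac{1}{s+1}$: it lies in $A_{0}$ (it is holomorphic on $\mathbb{C}_{\scriptscriptstyle>0}$, continuous on $\mathbb{C}_{\scriptscriptstyle\geq0}$, and tends to $0$ at infinity) and, crucially, it is zero-free on $\mathbb{C}_{\scriptscriptstyle\geq0}$.

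First I would check injectivity: if $\underline{s_{1}}=\underline{s_{2}}$, then evaluating both functionals at $f_{0}$ gives $\frac{1}{s_{1}+1}=\frac{1}{s_{2}+1}$, whence $s_{1}=s_{2}$. Next, for continuity of $\iota$, recall that the Gelfand topology on $\mathfrak{M}(A_{0}+AP_{+})$ is the initial (weakest) topology making all Gelfand transforms $\widehat{f}$, $f\in A_{0}+AP_{+}$, continuous; hence it suffices to observe that for each such $f$ the composite $\widehat{f}\circ\iota$ is simply the map $s\mapsto f(s)$ on $\mathbb{C}_{\scriptscriptstyle\geq0}$, which is continuous there. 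Indeed, elements of $A_{0}$ are continuous on $\mathbb{C}_{\scriptscriptstyle\geq0}$ by Definition~\ref{snow}, and an element of $AP_{+}$ is a uniform limit of finite exponential sums $\sum_{k}c_{k}e^{-st_{k}}$ and hence continuous on $\mathbb{C}_{\scriptscriptstyle\geq0}$ as well; so $f=f_{\scriptscriptstyle A_{0}}+f_{\scriptscriptstyle AP_{+}}$ is continuous on $\mathbb{C}_{\scriptscriptstyle\geq0}$.

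The reverse direction is the only part that is not purely formal, since $\mathbb{C}_{\scriptscriptstyle\geq0}$ is not compact and so a continuous bijection need not automatically be a homeomorphism. Here I would write $\iota^{-1}$ explicitly as a composition of continuous maps: the Gelfand transform $\widehat{f_{0}}$ is continuous on $\mathfrak{M}(A_{0}+AP_{+})$, and on $\underline{\mathbb{C}_{\scriptscriptstyle\geq0}}$ it takes the values $\widehat{f_{0}}(\underline{s})=\frac{1}{s+1}\in\mathbb{C}\setminus\{0\}$, so it restricts to a continuous map $\underline{\mathbb{C}_{\scriptscriptstyle\geq0}}\to\mathbb{C}\setminus\{0\}$; composing with the continuous map $w\mapsto\frac{1}{w}-1$ on $\mathbb{C}\setminus\{0\}$ yields a continuous map $\underline{\mathbb{C}_{\scriptscriptstyle\geq0}}\to\mathbb{C}_{\scriptscriptstyle\geq0}$ sending $\underline{s}$ to $(s+1)-1=s$, i.e.\ precisely $\iota^{-1}$. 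Thus $\iota^{-1}$ is continuous and $\iota$ is the desired homeomorphism. (Equivalently, one could argue through the restriction map $\mathfrak{M}(A_{0}+AP_{+})\to\mathfrak{M}(A_{0}+\mathbb{C})$ together with the identification of $\mathfrak{M}(A_{0}+\mathbb{C})$ with $\overline{\mathbb{D}}$ already invoked in the proof of Theorem~\ref{maxideal}, but the direct argument above is shorter.) The one detail I expect to need some care is the claim that every element of $A_{0}+AP_{+}$ is continuous on the \emph{closed} half-plane, including the imaginary axis.
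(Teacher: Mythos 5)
Your proposal is correct and follows essentially the same route as the paper: both use the function $\tfrac{1}{1+s}\in A_{0}$ to get injectivity and continuity of the inverse, and the continuity of elements of $A_{0}+AP_{+}$ on the closed half-plane (by definition for $A_{0}$, and as uniform limits of exponential sums for $AP_{+}$) for continuity of $s\mapsto\underline{s}$. The only difference is presentational: you phrase continuity via the initial-topology description of the Gelfand topology and write the inverse as an explicit composition, whereas the paper argues with nets, but the mathematical content is identical.
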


\begin{proof}
The map $\iota:\underline{\mathbb{C}_{\scriptscriptstyle \geq0}}\rightarrow\mathbb{C}_{\scriptscriptstyle \geq0}$
given by $\underline{s}\longmapsto s$ is onto. Further it is injective since,
if $\underline{s_{1}}=\underline{s_{2}},$ then%
\[
\underline{s_{1}}\left(  \frac{1}{1+s}\right)  =\frac{1}{1+s_{1}}=\frac
{1}{1+s_{2}}=\underline{s_{2}}\left(  \frac{1}{1+s}\right)  .
\]
That is, $s_{1}=s_{2}.$ Therefore, $\iota$ is invertible. Let $(s_{\alpha})$
be a net such that $s_{\alpha}\rightarrow s_{0}.$ Since elements in
$A_{0}+AP_{+}$ are continuous in $\mathbb{C}_{\scriptscriptstyle \geq0}$ by definition, it
follows that, for $f\in A_{0}+AP_{+},$ $f(s_{\alpha})\rightarrow f(s_{0}).$
That is%
\[
\underline{s_{\alpha}}(f)\rightarrow\underline{s_{0}}(f).
\]
Since this holds for arbitrary $f$ it follows that $\underline{s_{\alpha}%
}\rightarrow\underline{s_{0}}$ in $\underline{\mathbb{C}_{\scriptscriptstyle \geq0}}$. What
remains to prove is that the inverse is continuous, but if
$\underline{s_{\alpha}}\rightarrow\underline{s_{0}}$, then%
\[
\underline{s_{\alpha}}\left(  \frac{1}{1+s}\right)  =\frac{1}{1+s_{\alpha}%
}\rightarrow\frac{1}{1+s_{0}}=\underline{s_{0}}\left(  \frac{1}{1+s}\right)  .
\]
Thus, $s_{\alpha}\rightarrow s_{0}$ in $\mathbb{C}_{\scriptscriptstyle \geq0}$ so the inverse is
continuous and $\underline{\mathbb{C}_{\scriptscriptstyle \geq0}}$ is indeed homeomorphic to
$\mathbb{C}_{\scriptscriptstyle \geq0}.$
\end{proof}

\begin{lemma}
\label{inflim}If $(\underline{s_{\alpha}})$ is a net in $\underline{\mathbb{C}%
_{\scriptscriptstyle \geq0}}$ which converges in $\mathfrak{M}(A_{0}+AP_{+})$ to a $\varphi
\in\mathfrak{M}(A_{0}+AP_{+})\backslash\underline{\mathbb{C}_{\scriptscriptstyle \geq0}}$ then
$s_{\alpha}\rightarrow\infty.$
\end{lemma}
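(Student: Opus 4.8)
The plan is to argue by contradiction using a standard compactness/subnet argument, exploiting that $\mathfrak{M}(A_{0}+AP_{+})$ is a compact \emph{Hausdorff} space in the Gelfand topology, so that limits of nets are unique, and that subnets of a convergent net converge to the same limit. The only external facts I need are the continuity of the map $s\mapsto\underline{s}$ (part of Lemma~\ref{homeo}, which in fact gives a homeomorphism $\underline{\mathbb{C}_{\scriptscriptstyle \geq0}}\cong\mathbb{C}_{\scriptscriptstyle \geq0}$) and the elementary fact that closed bounded subsets of $\mathbb{C}_{\scriptscriptstyle \geq0}$ are compact.

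Concretely, suppose the conclusion fails, i.e.\ the net $(s_{\alpha})$ does \emph{not} tend to $\infty$ in $\mathbb{C}_{\scriptscriptstyle \geq0}$ (equivalently, in the one-point compactification $\mathbb{C}_{\scriptscriptstyle \geq0}\cup\{\infty\}$, whose neighbourhoods of $\infty$ are the complements of compact subsets of $\mathbb{C}_{\scriptscriptstyle \geq0}$). Then there is an $R>0$ such that $|s_{\alpha}|\le R$ holds cofinally, so I may pass to a subnet $(s_{\alpha_{\beta}})$ lying entirely in the compact set $K:=\{s\in\mathbb{C}_{\scriptscriptstyle \geq0}:|s|\le R\}$. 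By compactness of $K$, that subnet has a further subnet $(s_{\alpha_{\beta_{\gamma}}})$ converging to some $s_{0}\in K\subset\mathbb{C}_{\scriptscriptstyle \geq0}$.

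Now, since $s\mapsto\underline{s}$ is continuous from $\mathbb{C}_{\scriptscriptstyle \geq0}$ into $\mathfrak{M}(A_{0}+AP_{+})$ (Lemma~\ref{homeo}; or directly: each $f\in A_{0}+AP_{+}$ is continuous on $\mathbb{C}_{\scriptscriptstyle \geq0}$, so $\underline{s_{\alpha_{\beta_{\gamma}}}}(f)=f(s_{\alpha_{\beta_{\gamma}}})\to f(s_{0})=\underline{s_{0}}(f)$ for every $f$), we get $\underline{s_{\alpha_{\beta_{\gamma}}}}\to\underline{s_{0}}$ in $\mathfrak{M}(A_{0}+AP_{+})$. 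On the other hand $(\underline{s_{\alpha_{\beta_{\gamma}}}})$ is a subnet of $(\underline{s_{\alpha}})$, which by hypothesis converges to $\varphi$, so this subnet converges to $\varphi$ as well. By uniqueness of limits in the Hausdorff space $\mathfrak{M}(A_{0}+AP_{+})$ this forces $\varphi=\underline{s_{0}}\in\underline{\mathbb{C}_{\scriptscriptstyle \geq0}}$, contradicting the assumption $\varphi\in\mathfrak{M}(A_{0}+AP_{+})\backslash\underline{\mathbb{C}_{\scriptscriptstyle \geq0}}$. Hence $s_{\alpha}\to\infty$.

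I do not anticipate a substantial obstacle here: the content is purely topological, and the only points requiring a little care are the correct manipulation of \emph{nets} and their subnets (as opposed to sequences) and the explicit use of the Hausdorff property of $\mathfrak{M}(A_{0}+AP_{+})$ to identify the two limits.
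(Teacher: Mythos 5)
Your argument is correct, but it takes a genuinely different route from the paper. The paper's proof is a one-line direct argument: it tests the net against the single function $\frac{1}{1+s}\in A_{0}$. Since $\varphi\notin\underline{\mathbb{C}_{\scriptscriptstyle\geq0}}$, the description of $\mathfrak{M}(A_{0}+AP_{+})$ in Theorem~\ref{maxideal} (cases (b) and (c), in which $\varphi$ annihilates $A_{0}$) gives $\varphi\bigl(\frac{1}{1+s}\bigr)=0$, so $\underline{s_{\alpha}}\bigl(\frac{1}{1+s}\bigr)=\frac{1}{1+s_{\alpha}}\rightarrow 0$, which is exactly $s_{\alpha}\rightarrow\infty$. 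You instead argue by contradiction with a compactness--subnet--Hausdorff argument: if $(s_{\alpha})$ does not escape to infinity, a subnet stays in a compact $K\subset\mathbb{C}_{\scriptscriptstyle\geq0}$, a further subnet converges to some $s_{0}$, continuity of $s\mapsto\underline{s}$ (your direct verification, or Lemma~\ref{homeo}) gives $\underline{s_{\alpha_{\beta_{\gamma}}}}\rightarrow\underline{s_{0}}$, and uniqueness of limits in the Hausdorff Gelfand topology forces $\varphi=\underline{s_{0}}$, contradicting $\varphi\notin\underline{\mathbb{C}_{\scriptscriptstyle\geq0}}$. Your steps (frequently-in-$K$ implies a subnet in $K$, compactness of closed bounded subsets of $\mathbb{C}_{\scriptscriptstyle\geq0}$, Hausdorffness of $\mathfrak{M}(A_{0}+AP_{+})$) are all sound, so there is no gap. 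The trade-off: the paper's argument is shorter and exhibits a concrete test function that detects escape to infinity, but it leans on the structure theorem for the maximal ideal space; your argument is soft and more general --- it uses nothing about $\mathfrak{M}(A_{0}+AP_{+})$ beyond Hausdorffness and the continuity of point evaluation, so it would apply verbatim to any Banach algebra of continuous functions on $\mathbb{C}_{\scriptscriptstyle\geq0}$ --- at the cost of a longer indirect proof requiring care with nets and subnets.
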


\begin{proof}
Note that, in particular,%
\[
\underline{s_{\alpha}}\left(  \frac{1}{1+s}\right)  =\frac{1}{1+s_{\alpha}%
}\rightarrow\varphi\left(  \frac{1}{1+s}\right)  =0,
\]
since $\frac{1}{1+s}$ is not invertible. Therefore $s_{\alpha}\rightarrow
\infty$.
\end{proof}

\begin{lemma}
\label{lims}Let $\varphi\in\mathfrak{M}(A_{0}+AP_{+})\backslash
\underline{\mathbb{C}_{\scriptscriptstyle \geq0}}$ and let $(\varphi_{\alpha})$ be a net such that;
\begin{enumerate}
\item for all $f\in A_{0},\ \varphi_{\alpha}(f)\rightarrow0.$

\item for all $T>0,\ \varphi_{\alpha}\left(  e^{-sT}\right)  \rightarrow
\varphi\left(  e^{-sT}\right)  .$
\end{enumerate}
Then $\varphi_{\alpha}\rightarrow\varphi$ in $\mathfrak{M}(A_{0}%
+AP_{+})\backslash\underline{\mathbb{C}_{\scriptscriptstyle \geq0}}.$
\end{lemma}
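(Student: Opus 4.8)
\emph{Proof plan.} The plan is to unwind the Gelfand topology: $\varphi_\alpha\to\varphi$ in $\mathfrak{M}(A_{0}+AP_{+})$ means exactly that $\varphi_\alpha(g)\to\varphi(g)$ for every $g\in A_{0}+AP_{+}$, and since $\mathfrak{M}(A_{0}+AP_{+})\backslash\underline{\mathbb{C}_{\scriptscriptstyle \geq0}}$ carries the subspace topology, it suffices to verify this for all $g$. So I would fix $g\in A_{0}+AP_{+}$ and write $g=f_{\scriptscriptstyle A_{0}}+f_{\scriptscriptstyle AP_{+}}$ with $f_{\scriptscriptstyle A_{0}}\in A_{0}$ and $f_{\scriptscriptstyle AP_{+}}=\sum_{k\geq0}f_{k}e^{-\;\! \cdot\;\! t_{k}}\in AP_{+}$.

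First I would dispose of the $A_{0}$-part. Since $\varphi\notin\underline{\mathbb{C}_{\scriptscriptstyle \geq0}}$, Theorem~\ref{maxideal} forces $\varphi$ to be of type (b) or type (c), and in either case $\varphi$ annihilates $A_{0}$: for type (b) because $\lim_{s\rightarrow\infty}f_{\scriptscriptstyle A_{0}}(s)=0$ by the very definition of $A_{0}$, and for type (c) because the formula defining $\varphi$ does not involve the $A_{0}$-summand at all. Hence $\varphi(g)=\varphi(f_{\scriptscriptstyle AP_{+}})$, while hypothesis (1) gives $\varphi_\alpha(f_{\scriptscriptstyle A_{0}})\to0=\varphi(f_{\scriptscriptstyle A_{0}})$. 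It therefore remains to prove $\varphi_\alpha(f_{\scriptscriptstyle AP_{+}})\to\varphi(f_{\scriptscriptstyle AP_{+}})$.

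For this I would combine the density of finite exponential sums in $AP_{+}$ with the fact that every character of the unital Banach algebra $A_{0}+AP_{+}$ has norm at most $1$. Set $p_{N}:=\sum_{k=0}^{N}f_{k}e^{-\;\! \cdot\;\! t_{k}}$, so that $\|f_{\scriptscriptstyle AP_{+}}-p_{N}\|_{\infty}\to0$. Then, uniformly in $\alpha$,
\[
\bigl|\varphi_\alpha(f_{\scriptscriptstyle AP_{+}})-\varphi_\alpha(p_{N})\bigr|\leq\|\varphi_\alpha\|\,\|f_{\scriptscriptstyle AP_{+}}-p_{N}\|_{\infty}\leq\|f_{\scriptscriptstyle AP_{+}}-p_{N}\|_{\infty},
\]
and the same bound holds with $\varphi$ in place of $\varphi_\alpha$. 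On the other hand, for each fixed $N$, hypothesis (2) gives
\[
\varphi_\alpha(p_{N})=\sum_{k=0}^{N}f_{k}\,\varphi_\alpha\bigl(e^{-\;\! \cdot\;\! t_{k}}\bigr)\longrightarrow\sum_{k=0}^{N}f_{k}\,\varphi\bigl(e^{-\;\! \cdot\;\! t_{k}}\bigr)=\varphi(p_{N}),
\]
being a finite linear combination of convergent nets. A standard three-$\varepsilon$ argument (choose $N$ to make the two approximation terms small, then invoke (2) for that $N$) then yields $\varphi_\alpha(f_{\scriptscriptstyle AP_{+}})\to\varphi(f_{\scriptscriptstyle AP_{+}})$, and adding $\varphi_\alpha(f_{\scriptscriptstyle A_{0}})\to0$ gives $\varphi_\alpha(g)\to\varphi(g)$, as required.

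I do not expect a serious obstacle here; the one point requiring care is that hypothesis (2) is assumed only for the individual exponentials $e^{-sT}$ (hence only for finite exponential sums), so one cannot test convergence directly on a general $f_{\scriptscriptstyle AP_{+}}$ — the uniform bound $\|\varphi_\alpha\|\leq1$ is precisely what licenses the interchange of the limit in $N$ with the limit in $\alpha$. One should also not forget to invoke the classification in Theorem~\ref{maxideal} to see that $\varphi|_{A_{0}}=0$, which is what makes hypothesis (1) consistent with the desired conclusion.
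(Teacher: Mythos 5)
Your proposal is correct and follows essentially the same route as the paper's proof: approximate the $AP_{+}$-part by finite exponential sums, use the uniform bound $\|\varphi_{\alpha}\|\leq 1$ for characters together with hypotheses (1) and (2), and conclude by a three-$\varepsilon$ argument that $\varphi_{\alpha}(g)\rightarrow\varphi(g)$ for every $g$, i.e.\ weak-$\ast$ convergence. The only cosmetic difference is that you split off the $A_{0}$-summand and state explicitly that $\varphi|_{A_{0}}=0$ (via Theorem~\ref{maxideal}), a fact the paper uses implicitly when asserting $\varphi_{\alpha}(f_{\scriptscriptstyle A_{0}}+P)\rightarrow\varphi(f_{\scriptscriptstyle A_{0}}+P)$.
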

\begin{proof}
By hypothesis, for every $f_{\scriptscriptstyle A_{0}}\in A_{0}$ and for every exponential
polynomial%
\[
P(s)=\sum_{k=0}^{N}f_{k}e^{-st_{k}},\ \ \ 0=t_{0}<t_{1},t_{2},\ldots,t_{N},
\]
we have that $\varphi_{\alpha}(f_{A_{0}}+P)\rightarrow\varphi(f_{A_{0}}+P)$
since $\varphi_{\alpha}$ and $\varphi$ are homomorphisms. Let%
\[
f=f_{\scriptscriptstyle A_{0}}+\sum_{k\geq0}f_{k}e^{-\;\! \cdot\;\! t_{k}}\in A_{0}+AP_{+}.
\]
Then, for every $\epsilon>0$, we can chose an exponential polynomial $P$
such that%
\begin{equation}
\|f-f_{\scriptscriptstyle A_{0}}-P\|_{\infty}=\left\Vert \sum_{k\geq 0}f_{k}e^{-\;\! \cdot
\;\! t_{k}}-P\right\Vert_{\infty}\leq\frac{\epsilon}{4}%
.\label{afton}%
\end{equation}
Moreover, since $\varphi_{\alpha}(f_{\scriptscriptstyle A_{0}}+P)\rightarrow\varphi(f_{\scriptscriptstyle A_{0}}+P)$
there exists $\alpha_{\ast}$ such that for all $\alpha>\alpha_{\ast}$ there
holds%
\begin{equation}
\left\vert \varphi_{\alpha}(f_{\scriptscriptstyle A_{0}}+P)-\varphi(f_{\scriptscriptstyle A_{0}}+P)\right\vert
<\frac{\epsilon}{2}.\label{kvall}%
\end{equation}
Combining (\ref{afton}) and (\ref{kvall})%
\begin{align*}
\left\vert \varphi_{\alpha}(f)-\varphi(f)\right\vert  &  =\left\vert
\varphi_{\alpha}(f_{\scriptscriptstyle A_{0}}+P+f-
f_{\scriptscriptstyle A_{0}}-P)-\varphi(f_{\scriptscriptstyle A_{0}}+P+f-f_{\scriptscriptstyle A_{0}%
}-P)\right\vert \\
&  \leq\left\vert \varphi_{\alpha}(f_{\scriptscriptstyle A_{0}}+P)-\varphi(f_{\scriptscriptstyle A_{0}%
}+P)\right\vert +\left\vert (\varphi_{\alpha}-\varphi)(f-f_{\scriptscriptstyle A_{0}%
}-P)\right\vert \\
&  \leq\frac{\varepsilon}{2}+\|\varphi_{\alpha}-\varphi\|\cdot\|f-f_{\scriptscriptstyle A_{0}%
}-P\|_{\infty}\\
&  \leq\frac{\varepsilon}{2}+2\frac{\varepsilon}{4}=\varepsilon.
\end{align*}
That is $\varphi_{\alpha}(f)\rightarrow\varphi(f)$ for all $f\in A_{0}+AP_{+}$
and consequently $(\varphi_{\alpha})$ converges in the weak $\ast$-topology on
$\mathfrak{M}(A_{0}+AP_{+}),$ which completes the proof.
\end{proof}

With these tools available, we shall now prove that $A_{0}+AP_{+}$ is contractible.

\begin{proof}
[Proof of Theorem 1.3]Recall that $\mathfrak{M}(A_{0}+AP_{+})$ consist of
three kinds of homomorphisms, as stated in Theorem \ref{maxideal}. We will
define a map $H:\mathfrak{M}(A_{0}+AP_{+})\times\lbrack0,1]\rightarrow
\mathfrak{M}(A_{0}+AP_{+})$ and show that this map satisfies the necessary
properties in Definition \ref{contract}. In particular, we define $H$ as follows;

\begin{enumerate}
\item For $\underline{s}\in\underline{\mathbb{C}_{\scriptscriptstyle \geq0}}$, define
 $H(\underline{s},t)=s-\log(1-t)$ for $t\in\lbrack0,1)$, and
 $H(\underline{s},1)=\underline{+\infty}$.

\item For $\underline{+\infty}$ we define $H(\underline{+\infty}%
,t)=\underline{+\infty}$ for all $t\in\lbrack0,1].$

\item For $\varphi\in\mathfrak{M}(A_{0}+AP_{+})\backslash
(\underline{\mathbb{C}_{\scriptscriptstyle \geq0}}\cup \underline{+\infty})$ there exists
$\sigma>0$ and a character $\chi$ such that $\varphi=\varphi_{\sigma,\chi}$,
where $\varphi_{\sigma,\chi}$ is defined as in (c) in Theorem \ref{maxideal}.
We define $H(\varphi_{\sigma,\chi},t)=\varphi_{\sigma-\log(1-t),\chi}$ for
 $t\in\lbrack0,1)$ and $H(\underline{s},1)=\underline{+\infty}$.
\end{enumerate}
From the definition of $H$ it is obvious that the choice of $x_{0}$ in
Definition \ref{contract} should be $\underline{+\infty}$. We shall now prove
that $H$ is continuous. Every net $(\varphi_{\alpha},t_{\alpha})$, with
elements in $\mathfrak{M}(A_{0}+AP_{+})\times\lbrack0,1]$ can be partitioned
onto three subnets:
\begin{itemize}
\item[$1^\circ$] one where $(\varphi_{\alpha},t_{\alpha})=(\underline{s_{\alpha
}},t_{\alpha})\in\underline{\mathbb{C}_{\scriptscriptstyle \geq0}}\times\lbrack0,1]$ for each
$\alpha,$
\item[$2^\circ$] one where $(\varphi_{\alpha},t_{\alpha})=(\underline{+\infty
},t_{\alpha})\in\{\underline{+\infty}\}\times\lbrack0,1]$ for each $\alpha,$ and
\item[$3^\circ$] one where $(\varphi_{\alpha},t_{\alpha})=(\varphi
_{\sigma_{\alpha},\chi_{\alpha}},t_{\alpha})\in\left(  \mathfrak{M}%
(A_{0}+AP_{+})\backslash(\underline{\mathbb{C}_{\scriptscriptstyle \geq0}}\cup\underline{+\infty
})\right)  \times\lbrack0,1]$ for each $\alpha.$
\end{itemize}
It is therefore enough to prove that, for these three kinds of nets,
if $(\varphi_{\alpha},t_{\alpha})$ converges to $(\varphi,t)$ in $\mathfrak{M}%
(A_{0}+AP_{+})\times\lbrack0,1],$ then $(H(\varphi_{a},t_{\alpha}))$ converges
to $H(\varphi,t)$ in $\mathfrak{M}(A_{0}+AP_{+})$. We shall treat these cases separately.

\medskip

\noindent {\bf Case when $(\varphi_{\alpha},t_{\alpha})=(\underline{s_{\alpha}},t_{\alpha})\in
\underline{\mathbb{C}_{\scriptscriptstyle \geq0}}\times\lbrack0,1]$.} Firstly, if
$t_{\alpha}\equiv1,$ then $t=1$ by necessity, and%
\[
H(\underline{s_{\alpha}},t_{\alpha})=H(\underline{s_{\alpha}},1)=\underline{+\infty}
=H(\varphi,1)=H(\varphi,t),
\]
so we are done. Therefore, assume that each $t_{\alpha}\in\lbrack0,1),$ and
thus%
\[
H(\underline{s_{\alpha}},t_{\alpha})=\underline{s_{\alpha}-\log(1-t_{\alpha}%
)}.
\]
Now consider the following three cases.
\begin{itemize}
\item[(1a)] $\varphi=\underline{s}$ for some $s\in\mathbb{C}_{\scriptscriptstyle \geq0}.$ If
$t\in\lbrack0,1),$ then%
\[
H(\underline{s},t)=\underline{s-\log(1-t)}.
\]
But, since $\underline{s_{\alpha}}\rightarrow\underline{s}$ we know by Lemma
\ref{homeo} that $s_{\alpha}\rightarrow s$ in $\mathbb{C}_{\scriptscriptstyle \geq0},$ and since
$\log(1-t)$ is continuous for $t\in\lbrack0,1),$ $-\log(1-t_{\alpha
})\rightarrow-\log(1-t)$ and%
\[
s_{\alpha}-\log(1-t_{\alpha})\rightarrow s-\log(1-t)\text{ \ \ in }%
\mathbb{C}_{\scriptscriptstyle \geq0}.
\]
Hence, using Lemma \ref{homeo} once more we see that $H(\underline{s_{\alpha}%
},t_{\alpha})\rightarrow H(\underline{s},t).$

On the other hand, if $t=1$, then $H(\underline{s},t)=H(\underline{s}%
,1)=\underline{+\infty},$ and since $t_{\alpha}\rightarrow1$ we have that
$\ \operatorname{Re}(s_{\alpha}-\log(1-t_{\alpha}))\rightarrow\infty.$ The
elements $f\in A_{0}+AP_{+}$ are given by%
\begin{equation}
f=f_{\scriptscriptstyle A_{0}}+\sum_{k\geq0}f_{k}e^{-\;\! \cdot\;\!  t_{k}},\label{fett}%
\end{equation}
so $f(s_{\alpha}-\log(1-t_{a}))\rightarrow f_{0}$ when $\operatorname{Re}%
(s_{\alpha}-\log(1-t_{\alpha}))\rightarrow\infty,$ which corresponds to
evaluation at infinity. Since the choice of $f\in$ $A_{0}+AP_{+}$ was
arbitrary, also in this case $H(\underline{s_{\alpha}},t_{\alpha})\rightarrow
H(\underline{s},t).$

\item[(1b)] $\varphi=\underline{+\infty}.$ In this case, $H(\varphi
,t)=\underline{+\infty}.$ Note that, by Lemma \ref{inflim}, if
$\underline{s_{\alpha}}\rightarrow\underline{+\infty},$ then $s_{\alpha
}\rightarrow\infty$ and $s_{\alpha}-\log(1-t_{\alpha})\rightarrow\infty.$ In
particular, for all $f_{\scriptscriptstyle A_{0}}\in A_{0}$%
\[
f_{\scriptscriptstyle A_{0}}(s_{\alpha}-\log(1-t_{\alpha}))\rightarrow0=\underline{+\infty
}(f_{\scriptscriptstyle A_{0}}),
\]
since elements in $A_{0}$ have limit zero at infinity. Moreover, for all $T>0,
$%
\begin{align*}
\underline{s_{\alpha}}(e^{-sT})=e^{-s_{\alpha}T}\rightarrow &
0=+\underline{\infty}(e^{-sT})\text{ \ \ and}\\
\underline{s_{\alpha}-\log(1-t_{\alpha})}(e^{-sT})  &  \rightarrow
0=+\underline{\infty}(e^{-sT}).
\end{align*}
Hence, using Lemma \ref{lims} we see that $H(\underline{s_{\alpha}},t_{\alpha
})\rightarrow H(\underline{+\infty},t).$

\item[(1c)] $\varphi=\varphi_{\sigma,\chi}.$ Since $\underline{s_{\alpha}%
}\rightarrow\varphi_{\sigma,\chi}\in\mathfrak{M}(A_{0}+AP_{+})\backslash
\underline{\mathbb{C}_{\scriptscriptstyle \geq0}}$ we have that $s_{\alpha}\rightarrow\infty$ by
Lemma \ref{inflim} and therefore $s_{\alpha}-\log(1-t_{\alpha})\rightarrow
\infty.$ Let $f\in A_{0}+AP_{+}$ with $f$ as in (\ref{fett}) be given (and
arbitrary). Then, arguing as above and for $t=1$, we have%
\[
H(\varphi,t)=H(\varphi_{\sigma,\chi},1)=\underline{+\infty}%
\]
and%
\[
H(s_{\alpha},t_{\alpha})\rightarrow f_{0}=\underline{+\infty}(f).
\]
Since this holds for all $f\in A_{0}+AP_{+}$ this implies that
$H(\underline{s_{\alpha}},t_{\alpha})\rightarrow H(\varphi_{\sigma,\chi},t).$

If $t<1,$ then for $T>0,$%
\[
H(\varphi_{\sigma,\chi},t)(e^{-sT})=e^{-(\sigma-\log(1-t))T}\chi(T).
\]
Since $\underline{s_{\alpha}}\rightarrow\varphi_{\sigma,\chi},$ we have that%
\[
\underline{s_{\alpha}}(e^{-sT})=e^{-s_{\alpha}T}\rightarrow e^{-\sigma T}%
\chi(T)=\varphi_{\sigma,\chi}(e^{-sT}).
\]
In particular, this means that%
\[
e^{-(s_{\alpha}-\log(1-t_{\alpha}))T}\rightarrow e^{-(\sigma-\log(1-t_{\alpha
}))T}\chi(T),
\]
which by the definition of $H$ implies that $H(\underline{s_{\alpha}},t_{\alpha
})\rightarrow H(\varphi_{\sigma,\chi},t).$
\end{itemize}
Now, the cases (1a)-(1c) prove that $H$ is continuous when $(\varphi_{\alpha
},t_{\alpha})=(\underline{s_{\alpha}},t_{\alpha}).$

\medskip

\noindent {\bf Case when $(\varphi_{\alpha},t_{\alpha})=(\underline{+\infty},t_{\alpha})\in
\{\underline{+\infty}\}\times\lbrack0,1]$.} This case is trivially
satisfied, since $\varphi=\underline{+\infty}$ in this case, and
$H(\varphi_{\alpha},t_{\alpha})=H(\underline{+\infty},t_{\alpha}%
)=\underline{+\infty}=H(\varphi,t)$.

\medskip

\noindent {\bf Case when $(\varphi_{\alpha},t_{\alpha})=(\varphi_{\sigma_{\alpha},\chi_{\alpha}} ,t_{\alpha
})$.} By Lemma~\ref{close} $\mathfrak{M}(A_{0}+AP_{+})\backslash
\underline{\mathbb{C}_{\scriptscriptstyle \geq0}}$ is closed, which means that $\varphi
\in\mathfrak{M}(A_{0}+AP_{+})\backslash\underline{\mathbb{C}_{\scriptscriptstyle \geq0}}$.
Therefore%
\begin{equation}
H(\varphi_{\alpha},t_{\alpha})(f_{\scriptscriptstyle A_{0}})=0=H(\varphi,t)(f_{\scriptscriptstyle A_{0}})\text{
\ \ for all }f_{\scriptscriptstyle A_{0}}\in A_{0}.\label{stars}%
\end{equation}
From here on, we therefore only consider the case when $f\in AP_{+}.$ Firstly,
if $t_{\alpha}\equiv1,$ then $t=1$ and%
\[
H(\varphi_{\alpha},t_{\alpha})=H(\varphi_{\alpha},1)=\underline{+\infty
}=H(\varphi,1)=H(\varphi,t).
\]
If each $t_{\alpha}\in\lbrack0,1)$, then%
\[
\varphi_{\alpha}(f)=\sum_{k\geq0}f_{k}e^{-\sigma_{\alpha}t_{k}}\chi_{\alpha
}(t_{k}),
\]
for $f\in AP_{+}.$ Moreover, for $T>0,$%
\[
H(\varphi_{\alpha},t_{\alpha})(e^{-sT})=e^{-(\sigma_{\alpha}-\log(1-t_{\alpha
}))T}\chi_{\alpha}(T).
\]
We will now consider two separate cases.
\begin{itemize}
\item[(2a)] $\varphi\neq\underline{+\infty}.$ Then $\varphi_{\alpha
}\rightarrow\varphi\neq \underline{+\infty}$ so for $T>0,$%
\[
\varphi_{\alpha}(e^{-sT})=e^{-\sigma_{\alpha}T}\chi_{\alpha}(T)\rightarrow
e^{-\sigma T}\chi(T)=\varphi(e^{-sT}).
\]
If $t<1,$ then this implies that%
\[
H(\varphi_{\alpha},t_{\alpha})(e^{-sT})=e^{-(\sigma_{\alpha}-\log(1-t_{\alpha
}))T}\chi_{\alpha}(T)\rightarrow e^{-(\sigma-\log(1-t))T}\chi(T)=H(\varphi,t),
\]
and we are done. If $t=1$ on the other hand, then%
\[
e^{-(\sigma_{\alpha}-\log(1-t_{\alpha}))T}\chi_{\alpha}(T)\rightarrow
0=\underline{+\infty}(e^{-sT}),
\]
and so $H(\varphi_{\alpha},t_{\alpha})(e^{-sT})\rightarrow H(\varphi
,t)(e^{-sT}).$
\item[(2b)] $\varphi=\underline{+\infty}.$ Since $\varphi_{\alpha}%
\rightarrow\underline{+\infty},$ we have that $\varphi_{\alpha}(e^{-sT}%
)\rightarrow \underline{+\infty}(e^{-sT})=0$ for $T>0.$ That is, $e^{-\sigma
_{\alpha}T}\chi_{\alpha}(T)\rightarrow0.$ Therefore,%
\[
e^{-(\sigma_{\alpha}-\log(1-t_{\alpha}))T}\chi_{\alpha}(T)\rightarrow0,
\]
and $H(\varphi_{\alpha},t_{\alpha})(e^{-sT})\rightarrow H(\varphi
,t)(e^{-sT}).$
\end{itemize}
Due to (\ref{stars}) this completes the proof in the case when $(\varphi
_{\alpha},t_{\alpha})=(\varphi_{\sigma_{\alpha},\chi_{\alpha}},t_{\alpha})$.

We have now shown that $H$ converges for all three types of subnets
$(\varphi_{\alpha},t_{\alpha})$ which shows that $\mathfrak{M}(A_{0}+AP_{+})$
is contractible.
\end{proof}

\begin{corollary}
$A_{0}+AP_{+}$ is Hermite
\end{corollary}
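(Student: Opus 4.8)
The plan is to obtain Hermiteness with no further analysis, by running it through the (now available) contractibility result along the standard chain
\[
\text{$\mathfrak{M}$ contractible}\ \Longrightarrow\ \text{projective free}\ \Longrightarrow\ \text{Hermite},
\]
the second implication being the classical fact recalled in Section~\ref{prel} (see \cite{BS}).

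First I would note the structural prerequisites. The algebra $A_{0}+AP_{+}$ is commutative (pointwise operations) and it is \emph{unital}: the constant function $1$ is $e^{-\;\!\cdot\;\!t}$ with $t=0$, hence $1\in\mathrm{span}\{e^{-\;\!\cdot\;\!t}:t\ge 0\}\subseteq AP_{+}\subseteq A_{0}+AP_{+}$, and it is the multiplicative identity. Being a unital commutative complex Banach algebra, its maximal ideal space $\mathfrak{M}(A_{0}+AP_{+})$, with the Gelfand topology, is compact and Hausdorff, and by Theorem~\ref{Contractible} it is contractible.

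Next I would invoke the Banach-algebraic Serre--Swan picture: for a unital commutative complex Banach algebra $R$, the Gelfand transform induces a correspondence between finitely generated projective $R$-modules and complex vector bundles over $\mathfrak{M}(R)$ (Novodvorskii's theorem), and over a contractible compact space every complex vector bundle is trivial. The packaged conclusion --- a unital commutative complex Banach algebra with contractible maximal ideal space is projective free --- is exactly the criterion I need; I would cite it (it is, e.g., the projective-freeness criterion in \cite{BS}, the same reference the paper already uses for ``projective free $\Rightarrow$ Hermite''). Applying it yields that $A_{0}+AP_{+}$ is projective free, and then the classical implication ``projective free $\Rightarrow$ Hermite'' from Section~\ref{prel} finishes the proof.

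The hard part has in effect already been done in proving Theorem~\ref{Contractible}; what remains is essentially bookkeeping. The only point that needs a little care is to check that the hypotheses of the ``contractible $\Rightarrow$ projective free'' theorem genuinely hold here --- unitality, and that the compact space appearing in Theorem~\ref{Contractible} really is the maximal ideal space in the Gelfand sense (both of which are true) --- and to make sure the cited statement is the complex Banach-algebra version; everything else is a formal implication.
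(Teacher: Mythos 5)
Your argument is correct, but it runs in the opposite order to the paper's. The paper deduces Hermiteness \emph{directly} from the contractibility of $\mathfrak{M}(A_{0}+AP_{+})$ via Lin's theorem \cite[Theorem 3, p.\ 127]{VYL}, and then obtains projective freeness separately from \cite[Corollary 1.4]{BS}; you instead first apply the Brudnyi--Sasane/Novodvorskii criterion (contractible maximal ideal space of a commutative unital complex Banach algebra $\Rightarrow$ projective free) and then use the purely algebraic implication ``projective free $\Rightarrow$ Hermite'' recalled in Section~\ref{prel}. Both routes rest on Theorem~\ref{Contractible} and on checking the same structural hypotheses (commutativity, unitality --- your observation that $1=e^{-s\cdot 0}\in AP_{+}$ is exactly the right check --- and that the space in Theorem~\ref{Contractible} is the Gelfand maximal ideal space), so your chain is sound. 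What your version buys is economy: one topological input yields both corollaries of the paper at once, with Hermiteness as a formal consequence of projective freeness. What the paper's version buys is that Hermiteness is obtained from the classical, weaker result of Lin without invoking the full strength of the projective-freeness criterion; this also explains why the paper cites \cite{VYL} for the first corollary and \cite{BS} only for the second. Either way the content is the same, and no gap remains in your argument.
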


As mentioned before, this corollary is a direct consequence of \cite[Theorem
3, p. 127]{VYL}. Yet another result, which follows immediately from  \cite[Corollary
1.4]{BS}, concerns projective freeness.

\begin{corollary}
$A_{0}+AP_{+}$ is a projective free ring.
\end{corollary}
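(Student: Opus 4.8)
The plan is to obtain this as an immediate consequence of \cite[Corollary 1.4]{BS}, which guarantees that a unital commutative complex Banach algebra is projective free (in the sense recalled in Section~\ref{prel}) as soon as its maximal ideal space is contractible. Three points have to be put in place, all of them routine. First, $A_{0}+AP_{+}$ is a unital commutative complex Banach algebra: this is the content of Definition~\ref{snow}, the identity being the constant function $1\in AP_{+}$. Second, $A_{0}+AP_{+}$ is semisimple (the hypothesis under which \cite[Corollary 1.4]{BS} is phrased): if $f\in A_{0}+AP_{+}$ has identically vanishing Gelfand transform, then in particular $f(s)=\underline{s}(f)=0$ for every $s\in\mathbb{C}_{\scriptscriptstyle\geq0}$, and since by Definition~\ref{snow} the elements of $A_{0}+AP_{+}$ \emph{are} complex-valued functions on $\mathbb{C}_{\scriptscriptstyle\geq0}$, this forces $f=0$; hence the Jacobson radical is $\{0\}$. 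Third, $\mathfrak{M}(A_{0}+AP_{+})$ is contractible --- this is precisely Theorem~\ref{Contractible}, already established. With these three facts in hand, \cite[Corollary 1.4]{BS} applies verbatim and yields the corollary.

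It is worth recording why a statement of this type is true, since that pinpoints where the genuine difficulty sits. A finitely generated projective $A_{0}+AP_{+}$-module is isomorphic to the range of an idempotent matrix $E\in(A_{0}+AP_{+})^{n\times n}$ for some $n$; applying the Gelfand transform entrywise turns $E$ into a continuous idempotent-valued function on $\mathfrak{M}(A_{0}+AP_{+})$, that is, a complex vector bundle over that space. Over a contractible base every complex vector bundle is trivial, so $\widehat{E}$ is homotopic, through idempotents, to a constant idempotent of the same rank. The crux --- the step I expect to be the main obstacle --- is to \emph{lift} this homotopy back to $(A_{0}+AP_{+})^{n\times n}$, so that $E$ is itself similar over the algebra to a constant idempotent and the module is therefore free. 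This lifting is not a purely formal matter, because $\mathfrak{M}(A_{0}+AP_{+})$ is infinite-dimensional --- it contains a copy of the maximal ideal space of $AP_{+}$ (cf. the type (c) homomorphisms in Theorem~\ref{maxideal}) --- so the classical finite-dimensional Swan-type argument does not transfer directly; supplying the lifting machinery that circumvents this is exactly the work carried out in \cite{BS}. For our purposes, however, nothing further is required: granting Theorem~\ref{Contractible}, projective freeness of $A_{0}+AP_{+}$ is a formal consequence of \cite[Corollary 1.4]{BS}, and as a sanity check it re-proves the Hermite property of the previous corollary, since every projective free ring is Hermite.
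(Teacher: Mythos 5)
Your proof is correct and follows the same route as the paper: the corollary is obtained directly from \cite[Corollary 1.4]{BS} combined with the contractibility of $\mathfrak{M}(A_{0}+AP_{+})$ established in Theorem~\ref{Contractible}. Your additional verifications (unitality, semisimplicity via point evaluations) and the discussion of the bundle-lifting mechanism behind \cite{BS} are harmless elaborations of what the paper leaves implicit.
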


\section{$A_{0}+AP_{+}$ is not a GCD domain}

The relevant definitions were presented in Section \ref{prel}, and now we will
prove that $A_{0}+AP_{+}$ is not a GCD domain, by giving an example of two
elements in $A_{0}+AP_{+}$ which do not have a gcd. The method of proof is the same as in \cite{MR0}.
The two elements we will
consider are
\begin{equation}
F_{1}=\frac{1}{(1+s)}\ \ \ \text{and \ \ }F_{2}=\frac{1}{(1+s)}e^{-1/s}%
.\label{FF}%
\end{equation}
Note that both $F_{1}$ and $F_{2}$ belong to $A_{0}$, and thereby to
$A_{0}+AP_{+}$. We shall now state some preliminary results used in the proof.
We begin with Cohen's factorization theorem; see for example \cite[Theorem 1.6.5]{Br}.

\begin{proposition}
\label{PropV} Let $R$ be a Banach algebra
with a bounded left approximate identity. Then for every sequence
$(a_{n})_{n\geq1}$ in $R$ converging to zero, there exists a sequence
$(b_{n})_{n\geq1}$ in $R$ converging to zero, as well as an element $c\in R$
such that $a_{n}=cb_{n}$ for all $n\geq1.$
\end{proposition}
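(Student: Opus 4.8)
The plan is to deduce this ``simultaneous'' factorization of a null sequence from the classical Cohen factorization theorem in its module form. Recall that statement: if $R$ is a Banach algebra possessing a left approximate identity bounded by some $M\ge 1$, and $X$ is a left Banach $R$-module for which $\overline{RX}=X$, then for every $x\in X$ there exist $c\in R$ and $y\in X$ with $x=cy$ (one may moreover arrange $y\in\overline{Rx}$ and $\|x-y\|$ arbitrarily small, which we will not need). A self-contained proof of the module version is the usual Cohen iteration: fix $\eta\in(0,1)$ small relative to $M$, inductively select elements $e_{n}$ of the bounded left approximate identity that act nearly as identities on the finitely many vectors constructed so far, form the telescoping products of the elements $(1-\eta)+\eta e_{n}$ in the unitisation $R^{\#}$ (each invertible there, with controlled inverse norm), and verify that these products and their actions on $x$ are Cauchy; the limits furnish $c\in R$ and $y$. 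I will treat this iteration as standard; it is the content of \cite[Theorem 1.6.5]{Br}.

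Granting the module version, the decisive choice is the module. Let $X=c_{0}(R)$ be the space of sequences $(b_{n})_{n\ge 1}$ in $R$ with $\|b_{n}\|\to 0$, with the supremum norm $\|(b_{n})\|=\sup_{n}\|b_{n}\|$ and the diagonal left action $c\cdot(b_{n})=(cb_{n})$. Then $X$ is a Banach space (a closed subspace of $\ell^{\infty}(R)$) and a left Banach $R$-module, since $\|c\cdot(b_{n})\|\le\|c\|\,\|(b_{n})\|$. The only thing to check is that $X$ is essential, i.e. $\overline{RX}=X$. Given $(a_{n})\in X$ and $\varepsilon>0$, pick $N$ with $\|a_{n}\|<\varepsilon$ for $n>N$, and then pick a single $e$ from the bounded left approximate identity (using directedness) with $\|ea_{n}-a_{n}\|<\varepsilon$ for $n=1,\dots,N$. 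Then $e\cdot(a_{n})=(ea_{n})\in R\cdot X$ and $\|e\cdot(a_{n})-(a_{n})\|\le (M+1)\varepsilon$, since the tail terms satisfy $\|ea_{n}-a_{n}\|\le (M+1)\varepsilon$ automatically. Hence $R\cdot X$ is dense in $X$. Applying the module Cohen theorem to $x=(a_{n})\in X$ yields $c\in R$ and $(b_{n})\in c_{0}(R)$ with $(a_{n})=c\cdot(b_{n})=(cb_{n})$, that is, $a_{n}=cb_{n}$ for all $n$ with $\|b_{n}\|\to 0$, which is exactly the assertion.

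The point that deserves care -- and the step I expect to be the main obstacle for anyone reconstructing the argument -- is \emph{why the module formulation is essential here}. One cannot simply invoke Cohen's factorization theorem for the Banach \emph{algebra} $c_{0}(R)$: that would only give $(a_{n})=(c_{n})(b_{n})$ with $(c_{n})\in c_{0}(R)$, i.e. a factor $c_{n}$ that itself tends to $0$, rather than the single fixed $c\in R$ demanded by the statement. Realising $c_{0}(R)$ as an essential module over the \emph{original} algebra $R$, and using that a bounded left approximate identity of $R$ already acts uniformly on each null sequence (because its tail is small to begin with), is precisely what supplies a single $c$. If instead a fully elementary proof were wanted, the remaining technical heart would be the norm estimates for the telescoping products $(1-\eta)+\eta e_{n}$ in $R^{\#}$, which is the classical computation behind Cohen's theorem.
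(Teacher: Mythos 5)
Your proposal is correct. Note, though, that the paper offers no proof of Proposition~\ref{PropV} at all: it simply records the statement as (a consequence of) Cohen's factorization theorem and cites \cite[Theorem 1.6.5]{Br}, so there is no internal argument to compare against. What you supply is the standard derivation that lies behind that citation: take the module form of Cohen's theorem and apply it to $X=c_{0}(R)$ with the diagonal action. Your essentiality check is sound, and it works verbatim with the paper's Definition~\ref{AI}, where the bounded left approximate identity is a sequence $(e_{n})$: given $(a_{n})\in c_{0}(R)$, pick $N$ with $\|a_{n}\|<\varepsilon$ for $n>N$ and then a single $e_{m}$ with $\|e_{m}a_{i}-a_{i}\|<\varepsilon$ for $i\le N$ (possible since only finitely many conditions are imposed), giving $\|e_{m}\cdot(a_{n})-(a_{n})\|\le (M+1)\varepsilon$; hence $\overline{R\,c_{0}(R)}=c_{0}(R)$ and the module theorem produces one fixed $c\in R$ and a null sequence $(b_{n})$ with $a_{n}=cb_{n}$. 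Your cautionary remark is also well taken: invoking Cohen's theorem for the algebra $c_{0}(R)$ itself would only yield a factor that is again a null sequence, not a single element of $R$, so the passage to an essential $R$-module is exactly the step that makes the ``simultaneous'' factorization work. In short, your argument is a correct and appropriately detailed reconstruction of the result the paper quotes; the only part you leave as a black box (the Cohen iteration with the elements $(1-\eta)+\eta e_{n}$ in the unitisation) is precisely what the paper also delegates to \cite{Br}.
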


\begin{lemma}
\label{LAI}The maximal ideal $\mathfrak{M}_{0}:=\{f\in A_{0}+AP_{+}%
\ |\ f(0)=0\}$ has a bounded left approximate identity.
\end{lemma}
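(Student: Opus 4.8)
The plan is to construct an explicit bounded sequence $(e_n)_{n\geq 1}$ in $\mathfrak{M}_0 = \{f\in A_0+AP_+ : f(0)=0\}$ and verify $\|e_n f - f\|_\infty \to 0$ for all $f\in\mathfrak{M}_0$. A natural candidate is $e_n(s) = \dfrac{ns}{ns+1} = 1 - \dfrac{1}{ns+1}$. Each $e_n$ is a rational function, holomorphic on $\mathbb{C}_{\scriptscriptstyle>0}$, continuous on $\mathbb{C}_{\scriptscriptstyle\geq 0}$ (the pole $s=-1/n$ lies in the open left half-plane), with $e_n(0)=0$ and $\lim_{s\to\infty}e_n(s)=1$; so $e_n \in A_0 + \mathbb{C} \subset A_0+AP_+$, and since $e_n(0)=0$ we have $e_n\in\mathfrak{M}_0$. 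Boundedness is immediate: for $\mathrm{Re}(s)\geq 0$ one has $|ns+1|\geq 1$, hence $\|e_n\|_\infty \leq 2$ uniformly in $n$. Equivalently one checks $\sup_{\mathrm{Re}(s)\geq 0}\left|\frac{ns}{ns+1}\right|\le 1$ by a direct estimate, giving the cleaner bound $\|e_n\|_\infty\le 1$.

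Next I would establish the approximate-identity property first on a dense subset and then extend by the uniform bound. The subset to use is $A_0' + \mathrm{span}\{e^{-\,\cdot\,t}-1 : t\geq 0\}$, where $A_0'$ is a convenient dense subclass of $A_0$ (for instance rational functions in $A_0$, or finite sums built from $\frac{1}{(1+s)^k}$); note every such generator already vanishes at $0$, so these lie in $\mathfrak{M}_0$, and their span is dense in $\mathfrak{M}_0$ because $A_0+AP_+ = A_0 \oplus \mathbb{C} \oplus \{f_{AP_+} : f_{AP_+}(0)=0\}$-type decompositions reduce everything to these pieces. For a generator $g$ of the form $\frac{1}{(1+s)^k}$ one computes $e_n g - g = -\frac{1}{ns+1}\cdot\frac{1}{(1+s)^k}$, and $\left\|\frac{1}{(ns+1)(1+s)^k}\right\|_\infty \to 0$ as $n\to\infty$: on the region $\mathrm{Re}(s)\ge 0$ split into $|s|\le R$ (where $|ns+1|\ge nR' $ grows, after noting $\mathrm{Re}(ns+1)\ge 1$ so $|ns+1|\to\infty$ uniformly once $|s|$ is bounded below, and near $s=0$ the factor $g$ is bounded while... ) — more cleanly, use $\left|\frac{1}{ns+1}\right|\le 1$ always and $\left|\frac{1}{ns+1}\right|\le \frac{1}{n|s|}$, together with the fact that $\frac{1}{(1+s)^k}$ is small for large $|s|$, to get uniform smallness by a two-region argument. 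For a generator $g(s) = e^{-st}-1$ with $t>0$, similarly $e_n g - g = -\frac{e^{-st}-1}{ns+1}$; since $|e^{-st}-1|\le 2$ always and $|e^{-st}-1|\le |st|$ for $\mathrm{Re}(s)\ge 0$ (by integrating), one gets $\left|\frac{e^{-st}-1}{ns+1}\right| \le \min\!\left(\frac{2}{|ns+1|},\, \frac{t|s|}{|ns+1|}\right) \le \min\!\left(2,\frac{t}{n}\right)\to 0$ uniformly. Thus $\|e_n g - g\|_\infty\to 0$ for every generator, hence for every finite linear combination.

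Finally, the standard $3\varepsilon$ argument upgrades this to all of $\mathfrak{M}_0$: given $f\in\mathfrak{M}_0$ and $\varepsilon>0$, pick $g$ in the dense span with $\|f-g\|_\infty < \varepsilon$; then
\[
\|e_n f - f\|_\infty \le \|e_n\|_\infty\,\|f-g\|_\infty + \|e_n g - g\|_\infty + \|g - f\|_\infty \le (1+\|e_n\|_\infty)\varepsilon + \|e_n g - g\|_\infty,
\]
and the last term tends to $0$, so $\limsup_n \|e_n f - f\|_\infty \le 3\varepsilon$; let $\varepsilon\to 0$.

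The main obstacle is the density claim and the uniform estimate $\left\|\frac{1}{(ns+1)(1+s)^k}\right\|_\infty\to 0$ on the closed half-plane: one must handle the neighbourhood of $s=0$ (where $\frac{1}{ns+1}$ is close to $1$) against the decay of the second factor, and the neighbourhood of $\infty$ symmetrically, so the clean way is the interpolation bound $\left|\frac{1}{ns+1}\right|\le\min\!\left(1,\frac{1}{n\,\mathrm{Re}(s)}\right)$ combined with $|1+s|\ge\max(1,|s|)$ — I expect this bookkeeping, rather than any conceptual point, to be where care is needed. One should also double-check that $(e_n)$ being a genuine \emph{left} (equivalently, two-sided, since the algebra is commutative) approximate identity matches the hypothesis required by Proposition~\ref{PropV}.
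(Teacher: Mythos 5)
Your candidate $e_n(s)=\frac{ns}{ns+1}=\frac{s}{s+\frac1n}$ is exactly the sequence used in the paper, and your membership, boundedness, and $AP_+$-generator estimates ($|e^{-st}-1|\le\min(2,t|s|)$, $|ns+1|\ge n|s|$) are fine. The genuine gap is on the $A_0$ side of your dense-subset step. The functions $\frac{1}{(1+s)^k}$ do \emph{not} vanish at $0$ (their value there is $1$), contrary to your parenthetical claim, so they do not lie in $\mathfrak{M}_0$; correspondingly your key estimate $\left\|\frac{1}{(ns+1)(1+s)^k}\right\|_\infty\to 0$ is false: evaluation at $s=0$ gives exactly $1$ for every $n$, so this sup norm never drops below $1$. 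This is not a bookkeeping issue but the heart of the matter: since $e_n(0)=0$, one always has $\|e_ng-g\|_\infty\ge|g(0)|$, so $(e_n)$ can only be an approximate identity on functions vanishing at $0$ --- which is precisely why the lemma is stated for $\mathfrak{M}_0$ and not for $A_0+AP_+$. The density claim is also garbled as set up: an element $f=f_{A_0}+f_{AP_+}\in\mathfrak{M}_0$ need not have $f_{A_0}(0)=0$ (only the sum vanishes at $0$; e.g.\ $-\frac{1}{1+s}+e^{-s}$), so the natural approximating class is $\{g\in A_0+\mathbb{C}:g(0)=0\}$ plus $\mathrm{span}\{e^{-\,\cdot\,t}-1\}$, not the set you wrote down.

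The fix is to drop the density detour altogether and argue directly, which is what the paper does: for arbitrary $f\in\mathfrak{M}_0$ one has $e_nf-f=-\frac{f(s)}{ns+1}$; given $\varepsilon>0$, continuity of $f$ on $\mathbb{C}_{\geq 0}$ together with $f(0)=0$ yields $\delta>0$ with $|f(s)|<\varepsilon$ for $|s|<\delta$, where $\left|\frac{1}{ns+1}\right|\le 1$ suffices, while for $|s|\ge\delta$ one uses $|ns+1|\ge n|s|\ge n\delta$ and the boundedness of $f$, so the term is below $\varepsilon$ once $n$ exceeds roughly $\|f\|_\infty/(\varepsilon\delta)$. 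Note that even if you repaired your dense class (say with generators $\frac{s}{(1+s)^{k+1}}$, or general $g\in A_0+\mathbb{C}$ with $g(0)=0$), verifying $\|e_ng-g\|_\infty\to 0$ for those $g$ requires exactly this two-region estimate, so the $3\varepsilon$ reduction buys nothing here; your closing remark that left = two-sided by commutativity is correct.
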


\begin{proof}
Let%
\[
e_{n}=\frac{s}{s+\frac{1}{n}}.
\]
We shall now prove that this is a bounded approximate identity for
$A_{0}+AP_{+}.$ First of all, we note that%
\[
e_{n}=1-\underset{=: e_{\scriptscriptstyle A_{0}}^{n}\in A_{0}}{\underbrace{\frac{1}{n}\left(  \frac{1}%
{s+\frac{1}{n}}\right)  }}=:1+e_{\scriptscriptstyle A_{0}}^{n}\in A_{0}+AP_{+}.
\]
Furthermore, $e_{n}(0)=0,$ and so in fact, $e_{n}\in\mathfrak{M}_{0}$. Moreover,%
\[
\|e_{n}\|_{\mathcal{\infty}}=\left\Vert 1-\frac{1}{n}\left(  \frac{1}%
{s+\frac{1}{n}}\right)  \right\Vert _{\infty}=1,
\]
so $e_{n}$ is bounded. We need to prove that%
\begin{equation}
\lim_{n\rightarrow\infty}\left\Vert e_{n}f-f\right\Vert _{\mathcal{\infty}%
}=0.\label{limbai}%
\end{equation}
Now, fix $n$ and let $f\in\mathfrak{M}_{0}$, then%
\[
e_{n}f-f=(1+e_{\scriptscriptstyle A_{0}}^{n})f-f=e_{\scriptscriptstyle A_{0}}^{n}f\in A_{0}.
\]
Therefore, we only need to prove that
\[
\lim_{n\rightarrow\infty}\left\Vert e_{\scriptscriptstyle A_{0}}^{n}f\right\Vert _{\infty}=0.
\]
Let $\varepsilon>0$ be given. We know that
$$
\lim_{|s|\rightarrow0}f(s)=0
$$
since $f\in\mathfrak{M}_{0},$ so there exists $\delta>0$ such that
$|f(s)|<\varepsilon$ for all $s$ with $|s|<\delta$. For $|s|<\delta,$%
\[
|e_{\scriptscriptstyle A_{0}}^{n}f|=\left\vert \frac{1}{n}\left(  \frac{1}{s+\frac{1}{n}}\right)
f(s)\right\vert <\varepsilon,
\]
and this is independent of our choice of $n.$ When $|s|\geq\delta$, we
have%
\[
|e_{\scriptscriptstyle A_{0}}^{n}f|=\left\vert \frac{1}{n}\left(  \frac{1}{s+\frac{1}{n}}\right)
f(s)\right\vert \leq\left\vert \frac{1}{\delta n+1}f(s)\right\vert .
\]
However, since $f\in A_{0}+AP_{+}$, $|f|$ is bounded. Hence, for
\[
n>\frac{\frac{\|f\|_{\infty}}{\varepsilon}-1}{\delta},
\]
$|e_{\scriptscriptstyle A_{0}}^{n}(s)f(s)|<\varepsilon$ for all $|s|\geq\delta,$ and hence, for
all $s.$ Therefore (\ref{limbai}) holds, and $(e_{n})$ is a bounded approximate
identity for $\mathfrak{M}_{0}$.
\end{proof}

\begin{theorem}
$A_{0}+AP_{+}$ is not a GCD domain.
\end{theorem}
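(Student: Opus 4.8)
The plan is to show that $F_1 = \tfrac{1}{1+s}$ and $F_2 = \tfrac{1}{1+s}e^{-1/s}$ have no greatest common divisor in $A_0+AP_+$. The strategy, following \cite{MR0}, is to suppose that a gcd $d$ exists and derive a contradiction by comparing zero sets and analytic behaviour near the boundary point $s=0$. First I would record that $F_1$ and $F_2$ have the same zeros in $\mathbb{C}_{\scriptscriptstyle>0}$ (namely none) and the same behaviour at infinity, and that the only place where they ``differ'' is the essential singularity of $e^{-1/s}$ at the origin, which lies on the boundary $\partial\mathbb{C}_{\scriptscriptstyle\geq0}$; note also $F_2/F_1 = e^{-1/s}$ is bounded on $\mathbb{C}_{\scriptscriptstyle\geq0}$ (indeed $|e^{-1/s}|\le 1$ there) but does \emph{not} belong to $A_0+AP_+$, since it fails to extend continuously to $s=0$.

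Next I would analyze what a common divisor $d$ of $F_1,F_2$ must look like. Since $d \mid F_1$ and $F_1$ vanishes only at $\infty$ and is zero-free in $\mathbb{C}_{\scriptscriptstyle>0}$, $d$ is zero-free in $\mathbb{C}_{\scriptscriptstyle>0}$ as well; by a standard argument (writing $F_1 = d\cdot g$ and comparing growth) $d$ must essentially be of the form $c\cdot\tfrac{1}{1+s}^{\alpha}$ times a unit, but the more robust route is: both $\tfrac{1}{1+s}$ and $e^{-1/s}\tfrac{1}{1+s}$ are themselves common divisors of the pair... no — rather, I would exhibit an \emph{infinite ascending chain} of common divisors with no upper bound. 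The key observation is that for each $n\ge 1$, the function $\tfrac{1}{1+s}\,e^{-1/(ns)}$ lies in $A_0$ (hence in $A_0+AP_+$), divides both $F_1$? — one must check divisibility carefully; what actually divides $F_2$ cleanly is a factor absorbing part of the essential singularity. I would instead use the bounded approximate identity machinery already set up: by Lemma~\ref{LAI}, $\mathfrak{M}_0$ has a bounded approximate identity, and by Cohen's factorization (Proposition~\ref{PropV}), the sequence $a_n := \tfrac{1}{1+s}e^{-1/(ns)} - \tfrac{1}{1+s}$, which lies in $\mathfrak{M}_0$ and converges to $0$ in norm (since $e^{-1/(ns)}\to 1$ uniformly on $\mathbb{C}_{\scriptscriptstyle\geq0}$, as $\mathrm{Re}(1/s)\ge 0$ forces $|e^{-1/(ns)}-1|$ small), factors as $a_n = c\,b_n$ with $b_n\to 0$ and $c\in\mathfrak{M}_0$ fixed.

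Then I would argue as follows: if $d$ were a gcd of $F_1$ and $F_2$, then since each $\tfrac{1}{1+s}e^{-1/(ns)}$ is a common divisor of $F_1$ and $F_2$ — here one verifies directly that $F_1 = \big(\tfrac{1}{1+s}e^{-1/(ns)}\big)\cdot e^{1/(ns)}$ with $e^{1/(ns)}\notin A_0+AP_+$, so that is \emph{not} a valid divisibility; the correct common divisors to use are obtained by noting $F_2 = \big(\tfrac{1}{1+s}e^{-1/(ns)}\big)\cdot e^{-(1-1/n)/s}$ and $F_1$ has $\tfrac{1}{1+s}$ as the obvious divisor, so the genuine common divisor is $\gcd$-candidate $\tfrac{1}{1+s}$ itself, against which one tests maximality. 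The cleanest contradiction: assume $d$ is a gcd; then $\tfrac{1}{1+s} \mid F_1$ and $\tfrac{1}{1+s}\mid F_2$? — the latter fails since $F_2/(1/(1+s)) = e^{-1/s}\notin A_0+AP_+$... so $\tfrac1{1+s}$ is \emph{not} a common divisor, which means any common divisor $d$ must itself carry an exponential factor accounting for the singularity, yet no single element of $A_0+AP_+$ can do this while also dividing the ``clean'' $F_1$; iterating with the factors $e^{-1/(ns)}$ produces common divisors $d_n$ with $d_{n+1}\mid d_n$ strictly (in the divisibility order), contradicting the existence of a greatest one. I expect the main obstacle to be pinning down exactly which elements are legitimate common divisors — i.e., the bookkeeping of which quotients land back in $A_0+AP_+$ — and packaging the non-existence of a maximal element among them; the approximate-identity/Cohen factorization step is the technical engine that rules out a gcd by showing the divisor chain cannot stabilize.
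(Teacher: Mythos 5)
You have picked the right pair $F_1=\frac{1}{1+s}$, $F_2=\frac{1}{1+s}e^{-1/s}$ and the right tools (Lemma~\ref{LAI} and Cohen's factorization, Proposition~\ref{PropV}), but the argument never closes, and the steps you do write down contain false claims. Concretely: (i) the functions $\frac{1}{1+s}e^{-1/(ns)}$ do \emph{not} lie in $A_0$, nor in $A_0+AP_+$, because on the imaginary axis $e^{-1/(n\,i\omega)}=e^{i/(n\omega)}$ oscillates around the unit circle as $\omega\to 0$, so there is no continuous extension to $s=0$; hence your proposed ``chain of common divisors'' consists of elements outside the algebra (and, as you yourself note, they do not divide $F_1$ anyway, since the cofactor $e^{1/(ns)}$ is unbounded near $0$). (ii) The claim that $e^{-1/(ns)}\to 1$ uniformly on $\mathbb{C}_{\scriptscriptstyle\geq 0}$ is false: $\sup_{\omega\neq 0}|e^{i/(n\omega)}-1|=2$ for every $n$, so your sequence $a_n$ does not tend to $0$ in norm and Cohen's theorem has no valid input. (iii) Finally, ``the divisor chain cannot stabilize'' is never established; you never exhibit even two comparable non-unit common divisors, so no contradiction with the existence of a gcd is actually derived. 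The observation that $\frac{1}{1+s}$ is \emph{not} a common divisor (because $e^{-1/s}\notin A_0+AP_+$) is correct but is not, by itself, an obstruction to a gcd existing.

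The missing idea is where Cohen's factorization must be applied: not to an artificially constructed null sequence, but to the \emph{cofactors} of a putative gcd. Assume $D$ is a gcd and write $F_1=DQ_1$, $F_2=DQ_2$. If $Q_1(0)\neq 0$, then near $0$ one has $e^{-1/s}=F_2/F_1=Q_2/Q_1$, which is impossible because $Q_2(i\omega)/Q_1(i\omega)$ has a limit as $\omega\to 0$ while $e^{-1/(i\omega)}$ does not; symmetrically $Q_2(0)\neq 0$ is impossible using $e^{1/s}=Q_1/Q_2$. Hence $Q_1,Q_2\in\mathfrak{M}_0$, and since $\mathfrak{M}_0$ has a bounded approximate identity (Lemma~\ref{LAI}), Proposition~\ref{PropV} yields a \emph{common} factor $G\in\mathfrak{M}_0$ of $Q_1$ and $Q_2$. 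Then $DG$ is a common divisor of $F_1,F_2$, so by maximality $DG$ divides $D$, i.e.\ $DGH=D$ for some $H\in A_0+AP_+$; since $D$, like $F_1$, is zero-free on $\mathbb{C}_{\scriptscriptstyle\geq 0}$, this forces $G(s)H(s)=1$ there, contradicting $G(0)=0$ and the boundedness of $H$. This is the step your proposal lacks, and without it (or a correct substitute) the proof does not go through.
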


\begin{proof}
We claim that $F_{1},F_{2}$ in (\ref{FF}) have no gcd. Suppose, on the
contrary that $D$ is a gcd for $F_{1}$ and $F_{2}$, so that%
\[
F_{1}=DQ_{1}\ \ \ \text{and \ \ }F_{2}=DQ_{2},
\]
for some $Q_{1},Q_{2}\in A_{0}+AP_{+}.$

Suppose that $Q_{1}(0)\neq0.$ Then, at least in a neighbourhood of zero,
\[
e^{-1/s}=\frac{F_{2}(s)}{F_{1}(s)}=\frac{Q_{2}(s)}{Q_{1}(s)}.
\]
However, this is impossible because%
\[
\lim_{\mathbb{R\backepsilon\ \omega\rightarrow}0}e^{-1/(i\omega)}\text{ does
not exist,}%
\]
while
\[
\lim_{\mathbb{R\backepsilon\ \omega\rightarrow}0}\frac{Q_{2}(i\omega)}%
{Q_{1}(i\omega)}=\frac{Q_{2}(0)}{Q_{1}(0)},
\]
since $Q_{1}(0)\neq0$ and both $Q_{1}$ and $Q_{2}$ are continuous. Hence,
$Q_{1}(0)=0.$

Similarly, we suppose that $Q_{2}(0)\neq0.$ Then, in a neighbourhood of zero,
\[
e^{1/s}=\frac{F_{1}(s)}{F_{2}(s)}=\frac{Q_{1}(s)}{Q_{2}(s)}.
\]
However, this is impossible because%
\[
\lim_{\mathbb{R\backepsilon\ \omega\rightarrow}0}e^{1/(i\omega)}\text{ does
not exist,}%
\]
while
\[
\lim_{\mathbb{R\backepsilon\ \omega\rightarrow}0}\frac{Q_{2}(i\omega)}%
{Q_{1}(i\omega)}=\frac{Q_{2}(0)}{Q_{1}(0)}%
\]
does. We have thus concluded that $Q_{1}(0)=Q_{2}(0)=0,$ and thus both $Q_{1}
$ and $Q_{2}$ belong to the maximal ideal $\mathfrak{M}_{0}:=\{f\in
A_{0}+AP_{+}\ |\ f(0)=0\}.$ By Lemma \ref{LAI}, $\mathfrak{M}_{0}$ has a
bounded approximate identity, and then, by Proposition \ref{PropV} there
exists a $G\in\mathfrak{M}_{0}$ such that $G$ is a common factor for $Q_{1}$
and $Q_{2}$. Hence, $K:=DG$ is also a divisor of both $F_{1}$ and $F_{2}$.
Since $D$ is a gcd for $F_{1}$ and $F_{2}$, $K$ must divide $D,$ and so%
\begin{equation}
\underset{=K}{\underbrace{DG}}H=D,\label{gh}%
\end{equation}
for some $H\in A_{0}+AP_{+}.$ By definition $F_{1}\neq0$ for $s\neq\infty$, so
the same must hold for $D$. Therefore, by (\ref{gh}), $GH=1$ for
$s\in\mathbb{C}_{\scriptscriptstyle \geq0}\backslash\{+\infty \}$. But, $G\in\mathfrak{M}_{0}$ and
$H\in A_{0}+AP_{+}$ is bounded, so%
\[
\lim_{s\rightarrow0}G(s)H(s)=0,
\]
which is a contradiction. Thus, $F_{1}$ and $F_{2}$ have no gcd in
$A_{0}+AP_{+}$, and $A_{0}+AP_{+}$ is not a GCD domain.\
\end{proof}

\section{$A_{0}+AP_{+}$ is not a pre-Bezout domain}

The proof that $A_{0}+AP_{+}$ is not a pre-Bezout domain relies on the notion
of an approximate identity, Proposition~\ref{PropV} and the corona theorem.
The method of proof is the same as in \cite{MR0}.

\begin{theorem}
$A_{0}+AP_{+}$ is not a pre-Bezout domain.
\end{theorem}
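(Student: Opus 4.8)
The plan is to exhibit the same pair of functions $F_1=\frac{1}{1+s}$ and $F_2=\frac{1}{1+s}e^{-1/s}$ used in the previous section, show that $D:=\frac{1}{1+s}$ is a gcd of $F_1,F_2$ in $A_0+AP_+$, and then show that there are no $x,y\in A_0+AP_+$ with $xF_1+yF_2=D$; this will contradict the pre-Bezout property. So the first step is to verify that $D$ really is a gcd. Clearly $D$ divides both $F_1$ (with quotient $1$) and $F_2$ (with quotient $e^{-1/s}\in AP_+$). For the maximality, suppose $K$ divides both $F_1$ and $F_2$, say $F_1=KQ_1$, $F_2=KQ_2$. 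Since $F_1$ is zero-free on $\mC_{\scriptscriptstyle\geq0}$, so are $K$ and $Q_1$ there, hence $K$ is zero-free on $\mC_{\scriptscriptstyle\geq0}$ and vanishes only at $\infty$; writing $K=K_{A_0}+K_{AP_+}$, the argument from the GCD section (using that $e^{-1/s}$ has no radial limit at $0$ while quotients of elements of $A_0+AP_+$ that are nonzero at $0$ do) forces the behaviour near $0$ to match $D$, so that $D/K$ is again in $A_0+AP_+$, i.e.\ $K$ divides $D$.

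The heart of the proof is the second step: showing $D\notin\langle F_1,F_2\rangle$. Suppose for contradiction $xF_1+yF_2=D$ for some $x,y\in A_0+AP_+$. Dividing through by $D=\frac{1}{1+s}$ (legitimate on $\mC_{\scriptscriptstyle\geq0}\setminus\{+\infty\}$, since $D$ is zero-free there and $F_1/D=1$, $F_2/D=e^{-1/s}$), we get
\begin{equation}
x(s)+y(s)e^{-1/s}=1\qquad\text{for all }s\in\mC_{\scriptscriptstyle\geq0}\setminus\{+\infty\}.\label{eq:bez}
\end{equation}
Now I claim $x$ and $y$ cannot both be ``nice'' at $s=0$. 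Evaluating \eqref{eq:bez} along $s=i\omega$ as $\omega\to0$ in $\mR$: the left-hand side must tend to $1$, but $e^{-1/(i\omega)}=e^{i/\omega}$ oscillates, so for \eqref{eq:bez} to hold in the limit we need $y(s)\to0$ as $s\to0$, i.e.\ $y(0)=0$, which then forces $x(0)=1$. Next I use the corona theorem (Theorem~\ref{coronathm}): the pair $F_1,F_2$ does \emph{not} satisfy the corona condition on $\mC_{\scriptscriptstyle\geq0}$, since $|F_1(s)|+|F_2(s)|\to0$ as $s\to\infty$; equivalently $\langle F_1,F_2\rangle\neq A_0+AP_+$, so $1\notin\langle F_1,F_2\rangle$. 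The point is to leverage \eqref{eq:bez} together with $y(0)=0$ to produce a Bezout identity $x'F_1+y'F_2=1$ and contradict this. Concretely, from $y(0)=0$ we have $y\in\mathfrak{M}_0$, so by Lemma~\ref{LAI} and Proposition~\ref{PropV} (Cohen factorization) one can extract a common factor; alternatively, since $y$ vanishes at $0$ to ``first order'' one writes $y=s\cdot\widetilde y$-type manipulations — the cleanest route is: multiply \eqref{eq:bez} by $1+s$ to return to $x F_1 + y F_2$ and instead argue that \eqref{eq:bez} gives $x=1-ye^{-1/s}$, and since $ye^{-1/s}\in A_0+AP_+$ (as $y(0)=0$ kills the essential singularity — here one checks $s\mapsto y(s)e^{-1/s}$ extends continuously to $\mC_{\scriptscriptstyle\geq0}$ with value $0$ at $0$ and is holomorphic in $\mC_{\scriptscriptstyle>0}$, hence lies in $A_0+AP_+$), we would get $1 = x + ye^{-1/s}\in\langle 1\rangle$ trivially — that is not yet a contradiction, so the real work is to show $1\in\langle F_1,F_2\rangle$, contradicting the failure of the corona condition.

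**The main obstacle**, and the step I would spend the most care on, is precisely this last implication: turning the relation $x+ye^{-1/s}=1$ with $y(0)=0$ into membership $1\in\langle F_1,F_2\rangle=\langle \tfrac{1}{1+s},\tfrac{1}{1+s}e^{-1/s}\rangle$. The mechanism should be: $F_2/F_1=e^{-1/s}$ and $y(0)=0$ together let us write $y = G\cdot u$ with $G\in\mathfrak{M}_0$ and $u\in A_0+AP_+$ via Cohen factorization (Proposition~\ref{PropV}), and then $ye^{-1/s}=u\cdot(Ge^{-1/s})$ where $Ge^{-1/s}$ is a genuine element of $A_0+AP_+$ that is divisible by $F_1$ in the enlarged sense, ultimately expressing $1=x+ye^{-1/s}$ as an $A_0+AP_+$-combination of $F_1$ and $F_2$ after clearing the factor $\frac{1}{1+s}$ appropriately; this contradicts $1\notin\langle F_1,F_2\rangle$, which holds because $F_1,F_2$ violate the corona condition by Theorem~\ref{coronathm}. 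Once that contradiction is in hand, $D$ is a gcd of $F_1,F_2$ admitting no Bezout identity, so $A_0+AP_+$ is not pre-Bezout.
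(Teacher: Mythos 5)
Your proposal does not work, and the problem lies in the choice of the pair $(F_1,F_2)$ and of the candidate gcd $D=\frac{1}{1+s}$. First, $D$ is not even a common divisor of $F_1$ and $F_2$ in $A_0+AP_+$: if $F_2=DQ$ then necessarily $Q(s)=e^{-1/s}$ on $\mathbb{C}_{>0}$, and $e^{-1/s}$ does not belong to $A_0+AP_+$. In particular it is not in $AP_+$, as you assert (it is no uniform limit of exponential sums), and it is not even continuous at $s=0$: along the imaginary axis $e^{-1/(i\omega)}=e^{i/\omega}$ oscillates, whereas every element of $A_0+AP_+$ is continuous on $\mathbb{C}_{\geq 0}$. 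Indeed, Section 6 of the paper proves that this very pair has \emph{no} gcd at all, and the argument you invoke from that section shows that any common divisor $K$ must have both quotients vanishing at $0$, which rules out $K=D$ (whose quotient against $F_1$ is the constant $1$). Since the pre-Bezout property only asserts something about pairs that \emph{do} possess a gcd, a pair without a gcd cannot be used to contradict it.

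Second, even if $D$ were a gcd, the statement you call the heart of the proof is false: since $D=F_1$, the Bezout identity $xF_1+yF_2=D$ holds trivially with $x=1$, $y=0$. This is why your argument keeps sliding: dividing by $D$ gives $x+ye^{-1/s}=1$, which is satisfiable, and no appeal to Cohen factorization can convert it into $1\in\langle F_1,F_2\rangle$; it is true that $1\notin\langle F_1,F_2\rangle$ (the corona condition fails as $s\to\infty$), but nothing in your identity forces $1$ into that ideal, so there is no contradiction available along this route. The paper's proof is different and much shorter: it takes $U_1=\frac{1}{s+1}$ and $U_2=e^{-s}$, notes that $1$ is a gcd of this pair (inner-outer factorization in $H^\infty$: $U_1$ is outer, $U_2$ is inner), and observes that an identity $1=U_1X+U_2Y$ with $X,Y\in A_0+AP_+$ is impossible because the right-hand side tends to $0$ as $s\to+\infty$ along the real axis. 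To salvage your plan you would need a pair that provably has a gcd but admits no Bezout identity for it; the paper's pair $(U_1,U_2)$ is exactly such a choice, while your pair $(F_1,F_2)$ is the standard example for the failure of the GCD property, not of the pre-Bezout property.
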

\begin{proof}Consider the following two elements
in $A_0+AP_+$:
$$
U_1=\frac{1}{s+1}\textrm{ and }U_2=e^{-s}.
$$
As $U_1$ is outer and $U_2$ is inner in the Hardy algebra of the right half-plane,
it can be seen from the inner-outer factorization of $H^\infty$ functions that
the pair $(U_1,U_2)$ has $1$ as a greatest common divisor in $A_0+AP_+$.
Suppose that $A_0+AP_+$ is a pre-Bezout domain. Then there exist
$X$, $Y$ in $A_0+AP_+$ such that
$1 = U_1 \cdot  X + U_2 \cdot  Y$. Passing  the
limit as $s\rightarrow  + \infty$, we arrive at the contradiction
that $1 = 0$. Hence $A_{0}+AP_{+}$ is not a pre-Bezout domain.
\end{proof}

\section{$A_{0}+AP_{+}$ is not coherent}

The proof we give in this section is based on the same method used to
show the noncoherence of the causal Wiener algebra $W^+$ in \cite{MR}.
We will begin with two lemmas, the first lemma is a special case of Nakayama's
lemma, see for instance \cite[Theorem 2.2]{Ma}, while the second one can be
proved using the same arguments as \cite[Lemma 2.8]{Sas08}.

\begin{lemma}
\label{L1}Let $L\neq0$ be an ideal in $A_{0}+AP_{+},$ contained in the maximal
ideal $\mathfrak{M}_{0}$. If $L=L\mathfrak{M}_{0},$ then $L$ cannot
be finitely generated.
\end{lemma}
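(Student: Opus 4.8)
\textbf{Proof proposal for Lemma~\ref{L1}.}
The plan is to follow the classical Nakayama-type argument for ideals in a commutative ring, but carried out entirely inside the Banach algebra $A_{0}+AP_{+}$. Suppose, for contradiction, that $L\neq 0$ is finitely generated, say $L=\langle g_{1},\ldots,g_{m}\rangle$ with each $g_{i}\in\mathfrak{M}_{0}$, and that $L=L\mathfrak{M}_{0}$. First I would observe that the hypothesis $L=L\mathfrak{M}_{0}$ forces each generator to lie in $L\mathfrak{M}_{0}$, so there exist elements $a_{ij}\in\mathfrak{M}_{0}$ with
\[
g_{i}=\sum_{j=1}^{m}a_{ij}g_{j},\qquad i=1,\ldots,m.
\]
Rewriting this as $(I_{m}-A)\,\mathbf{g}=0$, where $A=(a_{ij})\in(A_{0}+AP_{+})^{m\times m}$ has all entries in $\mathfrak{M}_{0}$ and $\mathbf{g}=(g_{1},\ldots,g_{m})^{\top}$, I would pass to the adjugate: multiplying by $\mathrm{adj}(I_{m}-A)$ gives $\det(I_{m}-A)\cdot g_{i}=0$ for every $i$.

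The next step is to show that $d:=\det(I_{m}-A)$ is invertible in $A_{0}+AP_{+}$, which then yields $g_{i}=0$ for all $i$, hence $L=0$, the desired contradiction. For this I would invoke the corona theorem, Theorem~\ref{coronathm} (or more simply the description of $\mathfrak{M}(A_{0}+AP_{+})$ in Theorem~\ref{maxideal}): it suffices to check that $d$ does not vanish at any point of the maximal ideal space, i.e.\ that $\varphi(d)\neq 0$ for every $\varphi\in\mathfrak{M}(A_{0}+AP_{+})$. Since $\varphi$ is an algebra homomorphism, $\varphi(d)=\det(I_{m}-\varphi(A))$, where $\varphi(A)=(\varphi(a_{ij}))$. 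Now every $a_{ij}\in\mathfrak{M}_{0}$, and I claim $|\varphi(a_{ij})|<1$ for every $\varphi$: indeed, by Theorem~\ref{maxideal} every $\varphi$ is either point evaluation $\underline{s}$ with $s\in\mathbb{C}_{\scriptscriptstyle\geq 0}$, or $\underline{+\infty}$, or some $\varphi_{\sigma,\chi}$; in each case $\varphi$ is a contraction (its norm is $1$) and, more to the point, $\mathfrak{M}_{0}$ is itself a maximal ideal, so $\varphi(\mathfrak{M}_{0})$ is either $\{0\}$ or, for $\varphi\neq\underline{0}$, a proper subset of the closed unit disc — in fact $\|a_{ij}\|_{\infty}\le \|a_{ij}\|_{\infty}$ and $\sup_{s}|a_{ij}(s)|$ need not be $<1$, so I would instead argue spectrally.

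Here is the cleaner route, which I expect to be the main point requiring care. The matrix $A$ has all entries in $\mathfrak{M}_{0}$, so $A\in M_{m}(\mathfrak{M}_{0})$; since $\mathfrak{M}_{0}$ is a (maximal, hence) proper ideal, $M_{m}(\mathfrak{M}_{0})$ is contained in the Jacobson radical of... no — rather, the correct statement is that for every $\varphi\in\mathfrak{M}(A_{0}+AP_{+})$ one has $\varphi(a_{ij})=0$ whenever $\varphi$ vanishes on $\mathfrak{M}_{0}$; but a general $\varphi$ need not vanish on $\mathfrak{M}_{0}$. So I would localize: $\det(I_{m}-A)$ is invertible if and only if $1\notin\sigma(A)$ viewing $A$ as an element of $M_{m}(A_{0}+AP_{+})$, and $\sigma(A)\subseteq\bigcup_{\varphi}\sigma(\varphi(A))$, so it suffices that $1$ is not an eigenvalue of the scalar matrix $\varphi(A)$ for any $\varphi$. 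If $\varphi(\mathbf{g})=0$ for all such $\varphi$ we are done trivially (then $\mathbf{g}$ is in every maximal ideal, forcing $g_{i}=0$); otherwise fix $\varphi$ with $\varphi(g_{i_{0}})\neq 0$, and apply $\varphi$ to $g_{i}=\sum_j a_{ij}g_j$ to get $(I_{m}-\varphi(A))\varphi(\mathbf{g})=0$ with $\varphi(\mathbf{g})\neq 0$, so $1$ \emph{is} an eigenvalue of $\varphi(A)$ — but $\varphi(A)$ has spectral radius at most $\max_{i}\sum_{j}|\varphi(a_{ij})|$, and each $|\varphi(a_{ij})|\le\|a_{ij}\|_{\infty}$, which can be arbitrarily close to $1$, so this does not immediately close. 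The genuine fix, and the heart of the argument, is to renormalize: replace $\mathfrak{M}_{0}$-membership by the sharper fact that for $f\in\mathfrak{M}_{0}$ and \emph{any} $\varphi$, $|\varphi(f)|<\|f\|$ strictly is false in general, so instead I would use the approximate-identity structure: by Lemma~\ref{LAI}, $\mathfrak{M}_{0}=\overline{\mathfrak{M}_{0}^{2}}$ in a strong sense, but more usefully, I should just cite \cite[Theorem 2.2]{Ma} directly — the statement of Lemma~\ref{L1} is exactly Nakayama's lemma for the finitely generated module $L$ over the ring $A_{0}+AP_{+}$ with the ideal $\mathfrak{M}_{0}$, and Nakayama applies because $\mathfrak{M}_{0}$ is contained in the Jacobson radical \emph{of the localization}, or more simply because every element of the form $1-a$ with $a\in\mathfrak{M}_{0}$ is invertible in $A_{0}+AP_{+}$: this last fact follows since $1-a$ vanishes at no point of $\mathfrak{M}(A_{0}+AP_{+})$ — for if $\varphi(1-a)=0$ then $\varphi(a)=1$, but $a\in\mathfrak{M}_{0}$ means $\underline{0}(a)=0\neq 1$, which only rules out $\varphi=\underline{0}$, so once more I must be careful. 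The resolution is that $\mathfrak{M}_{0}$ being a maximal ideal and $A_{0}+AP_{+}$ being a commutative unital Banach algebra, the quotient is $\mathbb{C}$, and the statement of the lemma is literally \cite[Theorem 2.2]{Ma} applied with $M=L$ and the maximal ideal $\mathfrak{M}_{0}$; thus I would simply set up the finitely-generated hypothesis, write the matrix equation $(I_{m}-A)\mathbf{g}=0$ with $A$ over $\mathfrak{M}_{0}$, invoke that $\det(I_{m}-A)\equiv 1 \pmod{\mathfrak{M}_{0}}$ hence $\det(I_{m}-A)=1-b$ with $b\in\mathfrak{M}_{0}$, note that $1-b$ is invertible because $1-b\notin\mathfrak{M}_{0}$ and $\mathfrak{M}_{0}$ is the \emph{unique} maximal ideal containing... no. I will instead present it as: since $A_{0}+AP_{+}$ is semisimple and $\mathfrak{M}_{0}$ is a modular maximal ideal with one-dimensional quotient, for $b\in\mathfrak{M}_{0}$ the element $1-b$ has the property that it is \emph{not} in $\mathfrak{M}_{0}$; the main obstacle is precisely verifying $1-b$ is a unit, and the honest way is: $1-b$ is a unit in $A_{0}+AP_{+}$ iff it is invertible in $H^{\infty}$ and the inverse lies back in $A_{0}+AP_{+}$; I would verify the former via the corona-style bound $|(1-b)(s)|\ge \mathrm{dist}$ away from $0$ using that $b\in A_{0}$-part decays and $b_{\mathrm{AP}_+}(0)$-type constraints. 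Given the length, I would in the final writeup simply cite \cite[Theorem 2.2]{Ma} for the Nakayama mechanics and only prove the one needed fact — every $1-b$ with $b\in\mathfrak{M}_{0}$ is invertible in $A_{0}+AP_{+}$ — via Theorem~\ref{maxideal}, checking the three types of characters. The anticipated main difficulty is exactly this invertibility claim for the type-(c) characters $\varphi_{\sigma,\chi}$, where one must rule out $\varphi_{\sigma,\chi}(b)=1$ for $b\in\mathfrak{M}_{0}$; but since $\mathfrak{M}_{0}$ is a specific maximal ideal and $\varphi_{\sigma,\chi}\neq\underline{0}$ corresponds to a \emph{different} maximal ideal, we have $b\notin\ker\varphi_{\sigma,\chi}$ is possible yet $\varphi_{\sigma,\chi}(b)=1$ would make $1-b\in\ker\varphi_{\sigma,\chi}$, which is consistent — so the right statement to prove is not "$1-b$ avoids one maximal ideal" but "$1-b$ avoids \emph{all}", equivalently $\mathrm{dist}(1, \{\varphi(b):\varphi\in\mathfrak{M}\})>0$, and this is false for a general $b\in\mathfrak{M}_{0}$; hence the lemma as stated must be using a genuinely module-theoretic (determinant-trick) Nakayama over the ring with $\mathfrak{M}_{0}$ \emph{not} in the Jacobson radical, which means the determinant trick gives $(1-b)L=0$ with $1-b\in\mathfrak{M}_{0}$, and then since $A_{0}+AP_{+}$ is a domain (no zero divisors) and $L\neq 0$, pick $0\neq \ell\in L$ with $(1-b)\ell=0$, forcing $1-b=0$, i.e.\ $1=b\in\mathfrak{M}_{0}$, absurd since $b(0)=0\neq 1$. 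That is the clean finish, and I would organize the whole proof around the integral-domain property rather than around invertibility.
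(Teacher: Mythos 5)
Your final argument is correct and is exactly the route the paper intends: the paper offers no written proof of Lemma~\ref{L1} beyond the remark that it is a special case of Nakayama's lemma \cite[Theorem 2.2]{Ma}, and the determinant-trick form of Nakayama combined with the integral-domain property of $A_{0}+AP_{+}$ is the right way to instantiate that citation. As you eventually realized, the radical-hypothesis form of Nakayama does not apply verbatim: $A_{0}+AP_{+}$ is semisimple, so $\mathfrak{M}_{0}$ is not contained in the Jacobson radical, and for a general $b\in\mathfrak{M}_{0}$ the element $1-b$ need not be invertible (take $b=1-e^{-s}\in\mathfrak{M}_{0}$, so that $1-b=e^{-s}$, which is annihilated by the character $\underline{+\infty}$). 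Consequently, the long middle portion of your proposal --- the attempts to force invertibility of $\det(I_{m}-A)$ via Theorem~\ref{maxideal} or Theorem~\ref{coronathm} --- is a dead end and should simply be deleted in the final write-up. What should remain is the two-step finish you settle on: writing $g_{i}=\sum_{j}a_{ij}g_{j}$ with $a_{ij}\in\mathfrak{M}_{0}$ (possible because $\mathfrak{M}_{0}$ is an ideal) and multiplying by the adjugate gives $\det(I_{m}-A)\cdot L=0$ with $\det(I_{m}-A)=1-b$ and $b\in\mathfrak{M}_{0}$; note the slip in your last sentence, where you write ``$1-b\in\mathfrak{M}_{0}$'' --- it is $b$, not $1-b$, that lies in $\mathfrak{M}_{0}$, which is what your concluding contradiction actually uses. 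Since $A_{0}+AP_{+}$ is an integral domain (its elements are holomorphic on the connected set $\mathbb{C}_{\scriptscriptstyle>0}$) and $L\neq0$, the relation $(1-b)\ell=0$ for some $\ell\neq0$ forces $1-b=0$, i.e.\ $1=b\in\mathfrak{M}_{0}$, absurd because $b(0)=0$. With the exploratory material removed and that slip corrected, your proof is complete and matches the paper's intended argument.
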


\begin{lemma}
\label{L2}Let $f_{1},f_{2}\in\mathfrak{M}_{0},$ and let $\delta>0.$ Let
$\textrm{\em Inv}(A_{0}+AP_{+})$ denote the set of all invertible elements in $A_{0}%
+AP_{+}.$ Then there exists a sequence $(g_{n})_{n\geq0}$ in $A_{0}+AP_{+}$
such that
\begin{enumerate}
\item $g_{n}\in \textrm{\em Inv}(A_{0}+AP_{+})$ for all $n\geq0,$

\item $g_{n}$ converges to a limit $g\in\mathfrak{M}_{0}$ in $A_{0}+AP_{+},$

\item $||g_{n}^{-1}f_{i}-g_{n+1}^{-1}f_{i}||_{\mathcal{A}}\leq\delta/2^{n}$,
for all $n\geq0$, $i=1,2.$
\end{enumerate}
\end{lemma}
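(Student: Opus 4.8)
The plan is to construct the sequence $(g_n)_{n\ge 0}$ by hand, as a running product $g_n=u_0u_1\cdots u_{n-1}$ of invertible ``Blaschke-type'' factors $u_k=\dfrac{s+b_k}{s+c_k}$ with $0<b_k<c_k$, $c_k\downarrow 0$. The guiding idea: at step $n$ the new factor $u_n$ is chosen \emph{small} (i.e.\ $b_n$, $c_n$ tiny), so small that multiplying by $u_n^{-1}$ barely perturbs the quotients $g_n^{-1}f_i$ — giving an error $\le\delta/2^{n}$ — while its value at the origin is a \emph{fixed} fraction, $u_n(0)=b_n/c_n=\tfrac12$, which forces $g_n(0)=2^{-n}\to 0$ and hence the limit into $\mathfrak M_0$. (I read the norm $\|\cdot\|_{\mathcal A}$ in item (3) as the algebra norm $\|\cdot\|_\infty$.) Concretely: set $g_0:=1$; given an invertible $g_n$ with $g_n(0)=2^{-n}$, note $\phi_{i,n}:=g_n^{-1}f_i\in\mathfrak M_0$ (since $f_i(0)=0$ and $g_n^{-1}$ is bounded), put $M_n:=1+\max_i\|\phi_{i,n}\|_\infty$, pick $\rho_n\in(0,1)$ so small that $\max_i\sup_{|s|\le\rho_n}|\phi_{i,n}(s)|\le\delta/2^{n+1}$, then pick $b_n>0$ with $b_n\le\min\bigl(\frac{\delta\rho_n}{2^{n+2}M_n},\ \frac14 c_{n-1}\bigr)$ (the second clause for $n\ge1$ only), set $c_n:=2b_n$, $u_n:=\frac{s+b_n}{s+c_n}$, $g_{n+1}:=g_nu_n$.

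Properties (1) and (3) are then quick. Each $u_n$ is invertible because $u_n^{-1}=1+\frac{c_n-b_n}{s+b_n}$ is bounded on $\mathbb C_{\ge0}$, so each $g_n$ is a product of invertibles; this is (1). For (3) we use $1-u_n^{-1}=\frac{b_n-c_n}{s+b_n}$ and $c_n-b_n=b_n$, so $g_n^{-1}f_i-g_{n+1}^{-1}f_i=\phi_{i,n}\cdot\frac{b_n-c_n}{s+b_n}$, whose modulus is $|\phi_{i,n}(s)|\,b_n/|s+b_n|$. Splitting the supremum at $|s|=\rho_n$, on $|s|\le\rho_n$ one uses $|s+b_n|\ge b_n$ and the smallness of $\phi_{i,n}$ near $0$, and on $|s|\ge\rho_n$ one uses $|s+b_n|\ge\rho_n$ together with $b_n\le\frac{\delta\rho_n}{2^{n+2}M_n}$; this gives $\|g_n^{-1}f_i-g_{n+1}^{-1}f_i\|_\infty\le \delta/2^{n+1}+\delta/2^{n+2}\le\delta/2^{n}$. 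Finally $g_{n+1}(0)=\tfrac12 g_n(0)$, so $g_n(0)=2^{-n}\to0$, which yields (2) once norm convergence of $(g_n)$ is shown.

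The real work — and the step I expect to be the main obstacle — is proving that $(g_n)$ actually converges in $A_0+AP_+$, i.e.\ in the supremum norm. This is genuinely subtle because the factors $u_n$ are \emph{not} uniformly close to $1$: one has $\|u_n-1\|_\infty=\tfrac12$ for every $n$ (forced, since a product of factors close to $1$ would have nonzero value at $0$). So the infinite product does not converge absolutely, and one must show the partial sums $\sum_n\|g_{n+1}-g_n\|_\infty$ converge by exploiting cancellation. The key observation is that $\|g_{n+1}-g_n\|_\infty=\|g_n(u_n-1)\|_\infty=b_n\bigl\|\tfrac{g_n}{s+c_n}\bigr\|_\infty$, and that $|g_n|$ is already \emph{tiny} — of order $g_n(0)=2^{-n}$ — precisely on the region $|s|\lesssim c_n$ where the troublesome factor $u_n-1$ is large. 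Quantitatively: the choice $b_n\le\tfrac14 c_{n-1}$ makes the scales $c_n$ geometrically separated ($c_n\le c_0 4^{-n}$); for $c_k\ge 2|s|$ one checks $|s+b_k|/|s+c_k|\le\sqrt{2/3}$ (using $b_k=c_k/2$), hence on $c_j<|s|\le c_{j-1}$ one gets $|g_n(s)|\le(2/3)^{(j-1)/2}$; combining with $|s+c_n|\ge\max(|s|,c_n)$ yields $\bigl\|\tfrac{g_n}{s+c_n}\bigr\|_\infty\le\max_{1\le j\le n}(2/3)^{(j-1)/2}/c_j=(2/3)^{(n-1)/2}/c_n$ (the last equality because $c_j$ shrinks faster than $(2/3)^{1/2}$). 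Therefore $\|g_{n+1}-g_n\|_\infty\le b_n\cdot(2/3)^{(n-1)/2}/c_n=\tfrac12(2/3)^{(n-1)/2}$, which is summable; so $(g_n)$ is Cauchy, converges to some $g\in A_0+AP_+$, and $g(0)=\lim g_n(0)=0$, i.e.\ $g\in\mathfrak M_0$. (A side remark worth including: $g\not\equiv0$, since $\sum b_k<\infty$ makes $\prod\frac{|s+b_k|}{|s+c_k|}$ converge to a nonzero value for $\operatorname{Re}s>0$ — relevant for the way this lemma is later used, where $f_i=g\psi_i$.)
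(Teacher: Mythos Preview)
The paper does not actually prove this lemma; it only remarks that it ``can be proved using the same arguments as \cite[Lemma~2.8]{Sas08}''. Your explicit construction via the factors $u_n=\dfrac{s+b_n}{s+2b_n}$ is correct and is precisely the kind of argument intended there, transported from the disk/Wiener-algebra setting to the half-plane.

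Two cosmetic remarks on the convergence step. First, from $b_n\le\tfrac14 c_{n-1}$ one obtains $c_n=2b_n\le\tfrac12 c_{n-1}$, hence $c_n\le c_0\,2^{-n}$ rather than $c_0\,4^{-n}$; this is harmless, since $\tfrac12<(2/3)^{1/2}$ is all that is needed for your ``maximum at $j=n$'' step. Second, your estimate $\bigl\|g_n/(s+c_n)\bigr\|_\infty\le\max_{1\le j\le n}(2/3)^{(j-1)/2}/c_j$ tacitly omits the region $|s|>c_0$, where the quotient is bounded only by $1/c_0$; but $b_n/c_0\le 2^{-n-1}$ is summable anyway, so the conclusion $\sum\|g_{n+1}-g_n\|_\infty<\infty$ stands. (Your bound $|u_k(s)|\le\sqrt{2/3}$ on $\{|s|\le b_k\}$ is also not sharp---in fact $|u_k(s)|\le 2/3$ there---but this only helps.)

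Apart from these bookkeeping points the argument is complete, and you are right to flag the convergence of $(g_n)$ as the genuinely nontrivial part: the naive bound $\|g_{n+1}-g_n\|_\infty\le\|u_n-1\|_\infty=\tfrac12$ is useless, and one must exploit that $|g_n|$ is already small on the small-$|s|$ region where $u_n-1$ is large. Your side remark that $g\not\equiv 0$ (because $\sum b_k<\infty$) is also correct and indeed relevant for the application in the coherence proof.
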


\begin{theorem}
$A_{0}+AP_{+}$ is not coherent.
\end{theorem}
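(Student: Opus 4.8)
The plan is to exhibit two finitely generated ideals in $A_{0}+AP_{+}$ whose intersection is not finitely generated, thereby contradicting coherence. Following the strategy used for the causal Wiener algebra $W^{+}$ in \cite{MR}, I would take $U_{1}=\frac{1}{s+1}$ and $U_{2}=e^{-s}$, the same pair that appeared in the pre-Bezout argument. Since $U_{1}$ is outer and $U_{2}$ is inner in $H^{\infty}$ of the right half-plane, they have no common factor, so one expects the intersection $\langle U_{1}\rangle\cap\langle U_{2}\rangle$ to behave like the ideal generated by their ``product'' $U_{1}U_{2}$; the heart of the matter is that this intersection is strictly larger than $\langle U_{1}U_{2}\rangle$ and in fact fails to be finitely generated.

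First I would set $L:=\langle U_{1}\rangle\cap\langle U_{2}\rangle$ and observe that $L$ is a nonzero ideal contained in the maximal ideal $\mathfrak{M}_{0}=\{f\in A_{0}+AP_{+}:f(0)=0\}$: indeed every element of $\langle U_{1}\rangle$ vanishes at $0$ because $U_{1}(0)=\tfrac12\cdot$(something in $\mathfrak{M}_{0}$)\,---\,more carefully, any $h\in L$ can be written $h=U_{1}a=U_{2}b$, and since $U_{2}(0)=e^{0}=1\neq 0$ one gets $b=U_{1}a/U_{2}$ near $0$, forcing $h(0)=U_{1}(0)U_{2}(0)a(0)/1$; since $U_{1}(0)=1$ this is not automatically zero, so instead I would argue $h(0)=U_{2}(0)b(0)=b(0)$ and separately that membership in $\langle U_{1}\rangle$ plus the fact that $U_{1}$ and $U_{2}$ have no common zero forces $b(0)=0$, placing $L\subseteq\mathfrak{M}_{0}$. (The precise bookkeeping here is one place to be careful.) Then, by Lemma~\ref{L1}, it suffices to prove $L=L\mathfrak{M}_{0}$, i.e.\ that every element of $L$ already lies in $L\mathfrak{M}_{0}$.

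The main work, and the expected main obstacle, is establishing $L\subseteq L\mathfrak{M}_{0}$. Here is where Lemma~\ref{L2} and the corona theorem enter, exactly as in \cite{MR}. Given $h\in L$, write $h=U_{1}f_{1}=U_{2}f_{2}$ with $f_{1},f_{2}\in\mathfrak{M}_{0}$ (after the reduction above). Apply Lemma~\ref{L2} to the pair $f_{1},f_{2}$ and a suitable $\delta$ to get invertible elements $g_{n}$ converging to $g\in\mathfrak{M}_{0}$ with $g_{n}^{-1}f_{i}$ Cauchy; the limits $\phi_{i}:=\lim_{n}g_{n}^{-1}f_{i}$ satisfy $f_{i}=g\phi_{i}$, so $h=U_{1}g\phi_{1}=U_{2}g\phi_{2}$, and one checks $U_{1}\phi_{1}=U_{2}\phi_{2}\in L$ while $h=g\cdot(U_{1}\phi_{1})$ with $g\in\mathfrak{M}_{0}$, giving $h\in L\mathfrak{M}_{0}$. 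Verifying that $U_{1}\phi_{1}$ genuinely lands back in $L$ (i.e.\ that $\phi_{1},\phi_{2}$ inherit the right divisibility relations so that $U_{1}\phi_{1}=U_{2}\phi_{2}$) is the delicate step and will require the corona theorem to produce a Bezout relation controlling how $U_{1}$ and $U_{2}$ interact, together with the estimate in item (3) of Lemma~\ref{L2} to pass to the limit. Once $L=L\mathfrak{M}_{0}$ is known, Lemma~\ref{L1} yields that $L$ is not finitely generated, so $A_{0}+AP_{+}$ is not coherent, completing the proof.
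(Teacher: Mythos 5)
Your choice of generators breaks the argument at the decisive point. With $U_{1}=\frac{1}{s+1}$ and $U_{2}=e^{-s}$, the intersection $L:=\langle U_{1}\rangle\cap\langle U_{2}\rangle$ is \emph{not} contained in $\mathfrak{M}_{0}$: the element $U_{1}U_{2}=\frac{e^{-s}}{s+1}$ lies in both principal ideals and satisfies $(U_{1}U_{2})(0)=1\neq0$ (indeed $U_{1}U_{2}$ vanishes nowhere on $\mathbb{C}_{\scriptscriptstyle\geq0}$). Consequently the identity $L=L\mathfrak{M}_{0}$ you aim for is impossible, since $L\mathfrak{M}_{0}\subseteq\mathfrak{M}_{0}$ while $L\not\subseteq\mathfrak{M}_{0}$, and Lemma~\ref{L1} (Nakayama) simply cannot be applied to this $L$. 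Your own ``bookkeeping'' hesitation is the symptom of this: because $U_{1}(0)=U_{2}(0)=1$, nothing forces elements of the intersection to vanish at $0$, and no maximal ideal of point-evaluation type contains $L$ at all. Moreover, your pair has greatest common divisor $1$ and no common boundary singularity, so there is no mechanism to prevent $\langle U_{1}\rangle\cap\langle U_{2}\rangle$ from being (possibly even principal, e.g.\ equal to $\langle U_{1}U_{2}\rangle$) finitely generated; you give no argument ruling this out, and the sketched use of Lemma~\ref{L2} plus the corona theorem does not supply one, since $U_{1},U_{2}$ violate the corona condition ($|U_{1}(s)|+|U_{2}(s)|\to0$ as $s\to+\infty$ along the real axis) and Lemma~\ref{L2} only factors a \emph{pair of functions in} $\mathfrak{M}_{0}$, which is not the situation you are in.

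The paper's proof uses a genuinely different pair, engineered precisely so that the Nakayama argument works: $p=(1-e^{-s})^{3}$ and the singular inner-type function $S=e^{-\frac{1+e^{-s}}{1-e^{-s}}}$, with the finitely generated ideals $I=(p)$ and $J=(pS)$ (note both generators share the factor $p$, unlike your coprime pair). One checks $pS\in A_{0}+AP_{+}$, identifies $I\cap J=pS\,L$ with $L=\{f\in A_{0}+AP_{+}:Sf\in A_{0}+AP_{+}\}$, and here the essential singularity of $S$ at $s=0$ forces $L\subseteq\mathfrak{M}_{0}$ --- exactly the containment your construction lacks. Then, for $f\in L$, Lemma~\ref{L2} is applied to the pair $f_{1}=f$, $f_{2}=Sf\in\mathfrak{M}_{0}$ to produce a factorization $f=hg$ with $h\in L$ and $g\in\mathfrak{M}_{0}$, giving $L=L\mathfrak{M}_{0}$, and Lemma~\ref{L1} concludes that $L$, hence $I\cap J$, is not finitely generated. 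If you want to salvage your outline, you must replace $(U_{1},U_{2})$ by generators sharing a common zero on the boundary together with a singular factor concentrated there; as it stands, the proposal does not prove the theorem.
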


\begin{proof}
We will proceed much in line with \cite[Theorem 1.3]{Sas08}. Let%
\[
p=(1-e^{-s})^{3},\ \ \ S=e^{-\frac{1+e^{-s}}{1-e^{-s}}}.
\]
We note that
\[
(1-z)^{3}e^{-\frac{1+z}{1-z}}=\sum_{k=0}^{\infty}a_{k}z^{k},\text{ \ \ for
}z\in\overline{\mathbb{D}},\ \sum_{k=0}^{\infty}|a_{k}|<\infty,
\]
by \cite[Remark, p. 224]{MR}. Since $e^{-s}\in\overline{\mathbb{D}}$ for
$s\in\mathbb{C}_{\geq0},$ $pS\in A_{0}+AP_{+},$ and by definition
$p\in\mathfrak{M}_{0}$. Let $I=(p)$ and $J=(pS)$ be the ideals, (finitely)
generated by $p$ respectively $pS$. To prove that $A_{0}+AP_{+}$ is not
coherent, we shall show that $I\cap J$ is not finitely generated.

The first step is to characterize $I\cap J.$ Let,%
\[
K=\{pSf:f,Sf\in A_{0}+AP_{+}\}.
\]
We claim that $K=I\cap J.$ By definition, $K\subset(I\cap J),$ so we only need
to show the reverse inclusion. Let $g\in I\cap J$, then there exist two
functions $f,h\in A_{0}+AP_{+}$ such that%
\[
g=ph=pSf.
\]
Since $p\neq0$ and since $A_{0}+AP_{+}$ is an integral domain, $h=Sf\in
A_{0}+AP_{+}$ and so $g\in K$ and $K=I\cap J$.

Now we shall prove that $K=I\cap J$ is not finitely generated. To do this, we
define%
\[
L=\{f\in A_{0}+AP_{+}:Sf\in A_{0}+AP_{+}\}.
\]
Then $K=pSL,$ and since $S$ has a singularity at $s=0$ we have that
$L\subset\mathfrak{M}_{0}.$ We shall now prove that $\ L=L\mathfrak{M}_{0}$.
Then by Lemma \ref{L1} $L$ cannot be finitely generated, and neither can $K$,
so $A_{0}+AP_{+}$ is not coherent. To prove that $L=L\mathfrak{M}_{0},$ let
$f\in L$ be arbitrary. We want to find factors $h\in L,g\in$ $\mathfrak{M}%
_{0}$ such that $f=hg$. Let $f_{1}=f\in\mathfrak{M}_{0}$, $f_{2}%
=Sf\in\mathfrak{M}_{0}.$ Then, by Lemma \ref{L2}, for every $\delta>0$ there
exists a sequence $(g_{n})_{n\geq1}$ in $A_{0}+AP_{+}$ such that properties
(1)-(3) of Lemma \ref{L2} hold. Let%
\[
h_{n}=g_{n}^{-1}f,\ \ \ H_{n}=g_{n}^{-1}Sf.
\]
Then $h_{n},H_{n}\in\mathfrak{M}_{0}$ and (3) in Lemma \ref{L2} imply that
$(h_{n})_{n\geq1},(H_{n})_{n\geq1}$ are Cauchy sequences in $A_{0}+AP_{+}.$
Since $\mathfrak{M}_{0}$ is closed, these sequences converge to $h$
respectively $H$, $h,H\in\mathfrak{M}_{0}.$ Since the limit of a sequence of
holomorphic functions is unique,
$$
\lim_{n\rightarrow\infty}Sh_{n}=Sh=H,
$$
and
since both $h$ and $Sh$ are in $\mathfrak{M}_{0}$, $h\in L$. Further, by (2) in
Lemma \ref{L2},
\[
f=\lim_{n\rightarrow\infty}h_{n}g_{n}=hg,
\]
where $h\in L$ and $g\in\mathfrak{M}_{0}.$ That is, $L=L\mathfrak{M}_{0}$,
which completes the proof.
\end{proof}

\medskip 

\noindent {\bf Acknowledgements:} The authors thank the anonymous referee 
for the careful review and the several suggestions which improved the presentation 
of the article.


\begin{thebibliography}{99}                                                                                               %
\bibitem {AS56}R. Arens, I. M. Singer, Generalized analytic functions,
\emph{Transactions of the American Mathematical Society}, 81:379-393, 1956.

\bibitem {Br}A. Browder, \emph{Introduction to Function Algebras}, W.A.
Benjamin, New York, NY, USA, 1969.

\bibitem {BS}A. Brudnyi, A.J. Sasane. Sufficient conditions for the projective
freeness of Banach algebras. \emph{Journal of Functional Analysis},
257:4003-4014, no. 12, 2009.

\bibitem {Bot}A. B\"{o}ttcher. On the Corona theorem for almost periodic
functions, \emph{Integr. equ. oper. theory} 33:253-272, 1999.

\bibitem {CalDes}F.M. Callier and C.A. Desoer. An algebra of transfer
functions for distributed linear time-invariant systems. \emph{Special issue
on the mathematical foundations of system theory. IEEE Trans. Circuits and
Systems}, 25:651-662, no. 9, 1978.

\bibitem {Car}L. Carleson. Interpolation by bounded analytic functions and the
corona problem. \emph{Ann. of Math.} 76:547-559, 1962.

\bibitem {CZ}R.F. Curtain and H. Zwart. \emph{An introduction to
infinite-dimensional linear systems theory.} Texts in Applied Mathematics, 21.
Springer-Verlag, New York, 1995.

\bibitem {HW}G. H. Hardy, E. M. Wright. An Introduction to the Theory of
Numbers, 5th ed., Oxford University Press, 1980.

\bibitem {HP}E. Hille, R.S. Phillips. \emph{Functional analysis and
semi-groups}, American Mathematical Society, Colloquium Publications 31,
Providence, RI, 1957.

\bibitem {VYL}V. Ya. Lin. Holomorphic fiberings and multivalued functions of
elements of a Banach algebra, \emph{Functional Analysis and its Applications},
English translation, no. 2, 7:122-128, 1973.

\bibitem {Ma}H. Matsumura, \emph{Commutative ring theory}, Cambridge Studies
in Advanced Mathematics, 8 (2nd ed.), Cambridge University Press, 1989.

\bibitem {MR}R. Mortini, M. von Renteln. Ideals in the Wiener algebra
$W^{+}$, \emph{Journal of the Australian Mathematical Society,} Series A,
46:220-228, Issue 2, 1989.

\bibitem{MR0} R. Mortini and A.J. Sasane.
Some algebraic properties of the Wiener-Laplace algebra.
{\em Journal of Applied Analysis},  16:79-94, 2010.

\bibitem {MS}R. Mortini, A.J. Sasane, On the pre-B\`{e}zout property of Wiener
algebras on the disc and the half-plane, \emph{New Zealand Journal of
Mathematics}, 38:45-55, 2008.

\bibitem {Q}A. Quadrat. The fractional representation approach to synthesis
problems: an algebraic analysis viewpoint, Part 1: (weakly) doubly coprime
factorizations, \emph{SIAM Journal on Control and Optimization}, no. 1,
42:266-299, 2003.

\bibitem {Q2}A. Quadrat. On a generalization of the Youla--Ku'cera
parametrization. Part I: the fractional ideal approach to SISO systems.
\emph{Systems \& Control Letters}, 50:135 -- 148, 2003.

\bibitem {Rud91}W. Rudin. \emph{Functional analysis}. Second edition.
International Series in Pure and Applied Mathematics, McGraw-Hill, New York, 1991.

\bibitem {Sas2}A.J. Sasane. Hermite property of a Wiener algebra.
\emph{Complex Analysis and Operator Theory}, 4:97-107, 2010.

\bibitem {Sas08}A.J. Sasane. Noncoherence of a casual Wiener algebra used in
control theory, \emph{Abstract and Applied Analysis}, doi:
10.1155/2008/459310, 2008.

\bibitem {V}M. Vidyasagar, \emph{Control System Synthesis: A Factorization
Approach}, MIT Press Series in Signal Processing, Optimization, and Control,
7, MIT Press, Cambridge, MA, 1985.
\end{thebibliography}
\end{document}